\documentclass[11pt,a4paper]{amsart}

\usepackage{times} 
\usepackage{amsmath} 
\usepackage{amssymb}  
\usepackage{amsthm}
\usepackage{latexsym}
\usepackage{amsfonts}
\usepackage{xcolor}
\usepackage{graphicx}

\newtheorem{thm}{Theorem}
\newtheorem{cor}[thm]{Corollary}
\newtheorem{lem}[thm]{Lemma}
\newtheorem{prop}[thm]{Proposition}
\theoremstyle{definition}
\newtheorem{defn}[thm]{Definition}
\newtheorem{rem}[thm]{Remark}

\numberwithin{equation}{section}
\setlength{\oddsidemargin}{1cm}
\setlength{\evensidemargin}{1cm}
\setlength{\textwidth}{147mm}
\setlength{\parskip}{\smallskipamount}
\allowdisplaybreaks

%
%

\renewcommand{\emptyset}{\varnothing}

\newcommand{\braces}[1]{{\rm (}#1{\rm )}}

\newcommand{\AND}{\quad\text{and}\quad}

\newcommand{\R}{\ensuremath{\mathbb R}}    
\newcommand{\C}{\ensuremath{\mathbb C}}    
\newcommand{\N}{\ensuremath{\mathbb N}}    
\newcommand{\T}{\ensuremath{\mathbb T}}    


\newcommand{\<}{\langle}
\renewcommand{\>}{\rangle}


         
\newcommand{\calB}{\mathcal B}

\newcommand{\calH}{\mathcal H}

\newcommand{\calK}{\mathcal K}

\newcommand{\calU}{\mathcal U}

\newcommand{\calX}{\mathcal X}         
\newcommand{\calY}{\mathcal Y}         
\newcommand{\calZ}{\mathcal Z}         


\newcommand{\veps}{\varepsilon}

\newcommand{\bmat}[4]
{
	\begin{bmatrix}
		#1 & #2\\
		#3 & #4
	\end{bmatrix}
}
\newcommand{\bvec}[2]
{
	\begin{bmatrix}
		#1\\
		#2
	\end{bmatrix}
}

\newcommand{\sbvec}[2]{\left[\begin{smallmatrix}#1\\#2\end{smallmatrix}\right]}

\renewcommand{\Im}{\operatorname{Im}}
\renewcommand{\Re}{\operatorname{Re}}

\newcommand{\dom}{\operatorname{dom}}

\newcommand{\ran}{\operatorname{ran}}


\newcommand{\Lra}{\Longrightarrow}
\newcommand{\Sra}{\Rightarrow}

\newcommand{\Lla}{\Longleftarrow}

\newcommand{\wto}{\rightharpoonup}

\newcommand{\ol}{\overline}


\newcommand{\PP}{\mathbb{P}}
\newcommand{\ti}{\tilde}
\renewcommand{\AND}{\qquad\text{and}\qquad}
\newcommand{\clran}{\ol{\ran}\,}

\begin{document}
\title{Strict dissipativity for generalized linear-quadratic problems in infinite dimensions}
\author[L.\ Gr\"une, F.\ Philipp, M.\ Schaller]{Lars Gr\"une$^{1}$, Friedrich Philipp$^{2}$, and Manuel Schaller$^{2}$}
	\thanks{}
	\thanks{$^{1}$Department of Mathematics, Faculty of Mathematics, Physics and Computer Sciences, University of Bayreuth, Bayreuth, Germany
		{\tt\small lars.gruene@uni-bayreuth.de}}
	\thanks{$^{2}$Institute for Mathematics, Faculty of Mathematics and Natural Sciences, Technische Universit\"at Ilmenau, Ilmenau, Germany
		{\tt\small \{friedrich.philipp, manuel.schaller\}@tu-ilmenau.de}.}

	\thanks{{\bf Acknowledgments: }F.\ Philipp was funded by the Carl Zeiss Foundation within the project \textit{DeepTurb---Deep Learning in und von Turbulenz}. 
		M.\ Schaller and L.\ Gr\"une were funded by the DFG (project numbers 289034702 and 430154635 for MS and 244602989 for LG).
}

\begin{abstract}
We analyze strict dissipativity of generalized linear quadratic optimal control problems on Hilbert spaces. Here, the term ``generalized'' refers to cost functions containing both quadratic and linear terms. We characterize strict pre-dissipativity with a quadratic storage function via coercivity of a particular Lyapunov-like quadratic form. Further, we show that under an additional algebraic assumption, strict pre-dissipativity can be strengthened to strict dissipativity. Last, we relate the obtained characterizations of dissipativity with exponential detectability.

\smallskip
\noindent \textbf{Keywords.} Optimal control, strict dissipativity, infinite-dimensional problems, linear-quadratic problems
\end{abstract}

\maketitle

\section{Introduction}
The notions dissipativity and strict dissipativity of dynamical systems, introduced by Jan Willems in the seminal works \cite{Will72a,Will72b}, are central in analyzing the behavior of dynamical systems as they ensure an energy balance for trajectories in terms of a storage function and a supplied energy. Whereas dissipativity states that the stored energy can not increase by more than the supplied energy, strict dissipativity as its strengthened version includes an additional dissipation term.

While passivity, a particular case of dissipativity has been well-established for decades as a foundation for designing stabilizing controllers \cite{ByIW91,Scha00}, the role of dissipativity for analyzing economic model predictive control (MPC), where the cost functional is not positive definite in the state was recognized quite recently \cite{DiAR10,AnAR12,Grue13,GruS14}. There, a particular form of strict dissipativity with the supply rate given by the cost functional, renders the optimal value function a Lyapunov function that itself can be leveraged to prove stability of the MPC-closed loop. Besides direct applications in MPC stability analysis, dissipativity plays an important role for optimally operated steady states see \cite{Muel14,MuAA15} and \cite{MuGA15}, and to the so-called turnpike property at an optimal equilibrium, see \cite{GruM16}.

Despite these various applications of strict dissipativity in optimal control, the first characterization of dissipativity of finite-dimensional generalized linear-quadratic optimal control problems (OCPs), that is, OCPs with quadratic and linear terms in the cost and linear dynamics, was provided in \cite{GruG18,GruG21}. In these works, a close relationship between detectability, where the output corresponds to the quadratic cost in the state, and strict dissipativity were given. Recently, in \cite{GrueneMuffSchaller}, we presented preliminary steps towards a characterization of strict dissipativity in the context of infinite-dimensional systems, however under very strong spectral assumptions on the underlying semigroup. In this work, we first establish a result for general dynamics that characterizes strict pre-dissipativity by means of a Lyapunov-like form inequality and then link the latter to detectability properties.

This paper is organized as follows. After providing some notation and the problem statement in Section~\ref{sec:probfor}, we establish alternative representations of strict dissipativity by means of an integral inequality or strict dissipativity of a problem with rotated stage cost in Section~\ref{sec:SD}. In Section~\ref{sec:QSF}, we focus on quadratic storage functions and provide in Theorem~\ref{t:char} a characterization of dissipativity for a given steady state by means of a Lyapunov-like form inequality. Whereas this result for a given steady state involves an algebraic condition, we show in Theorem~\ref{t:some} that this condition can be omitted if one aims for dissipativity at some steady state. Last, in Section~\ref{sec:detec}, we prove in Theorem~\ref{t:det} that the obtained form inequality is implied by exponential detectability and that the converse implication holds under an additional algebraic assumption on the Lyapunov operator.

\section{Generalized linear-quadratic optimal control problems}
\label{sec:probfor}
In this part we define the class of optimal control problems (OCPs) we shall study in this paper. Before, however, we require some notation.

\smallskip
\textbf{Notation.} Let $(\calH,\<\,\cdot\,,\,\cdot\,\>)$ be a real or complex Hilbert space. A self-adjoint operator $T$ in $\calH$ will be called
\begin{itemize}
\item {\em strictly positive}, if there exists $c>0$ such that $\<Tx,x\>\ge c\|x\|^2$ for all $x\in\dom T$;
\item {\em positive}, if $\<Tx,x\> > 0$ for all $x\in\dom T\backslash\{0\}$;
\item {\em non-negative}, if $\<Tx,x\>\ge 0$ for all $x\in\dom T$.
\end{itemize}
The square root of a non-negative self-adjoint operator $T$ will be denoted by $T^{1/2}$. For the existence and properties of this square root we refer to \cite[Theorem V.3.35]{Kato2013}.

The space of all bounded linear operators from a Hilbert space $\calH_1$ into another one $\calH_2$ will be denoted by $L(\calH_1,\calH_2)$. As usual, we set $L(\calH) := L(\calH,\calH)$. An operator $K\in L(\calH)$ is called {\em coercive}, if $K^*K$ is strictly positive. If $M\subset\calH$ is a closed linear subspace, then $\PP_M$ denotes the orthogonal projection onto $M$. The closure of the range $\ran T$ of a linear operator $T$ will be denoted by $\clran T$.

We say that a function $V : \calH\to\R$ is differentiable at $x_0\in\calH$ if there exists a continuous $\R$-linear functional $V'(x_0) : \calH\to\R$ (the derivative of $V$ at $x_0$) such that
$$
\lim_{x\to x_0}\frac{V(x) - V(x_0) - V'(x_0)(x-x_0)}{\|x-x_0\|} = 0.
$$
As usual, $V$ is called differentiable if it is differentiable at each $x\in\calH$. Due to Riesz' representation theorem, the derivative has the form $V'(x)h = \Re\<h,g(x)\>$, $h\in\calH$, with a unique $g(x)\in\calH$. We will frequently identify $g(x)$ with $V'(x)$. If $V$ is differentiable, we say that $V$ is $C^1$ if the function $g : \calH\to\calH$ is continuous. If $V(x) = \<Px,x\> + 2\Re\<x,w\>$ with a self-adjoint operator $P\in L(\calH)$ and a vector $w\in\calH$, then $V$ is $C^1$ with $V'(x)h = 2\Re\<h,Px+w\>$.

{\bf The OCP.} Throughout this paper, we assume that $A$ is a generator of a $C_0$-semigroup on the Hilbert space $\calH$. The control space $\calU$ is also assumed to be a Hilbert space $(\calU,\langle\cdot,\cdot\rangle)$ with the same notation for the scalar product. We consider the infinite-dimensional linear quadratic problem
\begin{equation}\label{e:ocp}
\min_{u\in L^2(0,T;\calU)}\int_0^T\ell(x,u)\,dt\,\,\,\mathrm{s.t.}\, \dot x = Ax + Bu,\quad x(0) = x_0
\end{equation}
with cost function $\ell : \calH\times\calU\to\R$, defined by
$$
\ell(x,u) := \|Cx\|^2 + \|Ku\|^2 + 2\Re\<z,x\> + 2\Re\<v,u\>.
$$
Solutions of $\dot x = Ax + Bu$ will be considered in the mild sense. Here, $C\in L(\calH,\calY)$, $K\in L(\calU,\calZ)$, $B\in L(\calU,\calH)$, $z\in\calH$, and $v\in\calU$, where $\calY$ and $\calZ$ are Hilbert spaces. We assume that there is some $c_K>0$ such that
\begin{equation}\label{e:KB}
\|Ku\|\ge c_K\|Bu\|,\qquad u\in\calU.
\end{equation}
This condition is equivalent to $\ran B^*\subset\ran K^*$, see \cite[Theorem IV.2.2]{Zabczyk} and is trivially satisfied if $K$ is coercive.

A pair $(x_e,u_e)\in\dom A\times\calU$ will be called a {\em controlled equilibrium} (or {\em steady state}) of the control problem $\dot x = Ax + Bu$, if $Ax_e + Bu_e = 0$. It is easy to see that the OCP (\ref{e:ocp}) is equivalent to the following one:
\begin{equation}\label{e:ocp0}
\min_{u\in L^2(0,T;U)}\int_0^T\ti\ell(x,u)\,dt\,\,\,\mathrm{s.t.}\, \dot{x} = Ax + Bu,\,\, x(0) = x_0 - x_e,
\end{equation}
where
\begin{align*}
\tilde\ell(x,u) = \|Cx\|^2 + \|Ku\|^2 + 2\Re\<z + C^*Cx_e,x\> + 2\Re\<v + K^*Ku_e,u\>.
\end{align*}
By equivalence of (\ref{e:ocp}) and (\ref{e:ocp0}) it is meant that $(x,u)$ is optimal for (\ref{e:ocp}) if and only if $(x,u)-(x_e,u_e)$ is optimal for (\ref{e:ocp0}). The optimal values might differ from each other, however.

\section{Strict pre-dissipativity}
\label{sec:SD}
In this section, we shall introduce and discuss the notion of strict (pre-)dissipativity of the OCP (\ref{e:ocp}), which involves the notion of storage functions and the class $\calK$. The latter is defined by
\begin{align*}
\calK := \big\{\alpha : [0,\infty)\to[0,\infty)\,|\,\alpha\text{ is continuous}& \\
\text{and strictly increasing with }& \alpha(0)=0\big\}.
\end{align*}
A {\em storage function on $\calH$} is a $C^1$-function $V : \calH\to\R$.

The following definition of strict dissipativity in the realm of infinite-dimensional optimal control is directly adopted from the finite-dimensional setting, see, e.g., \cite{GrueneGuglielmi,GrueneMuffSchaller}.

\begin{defn}
We say that the linear-quadratic OCP (\ref{e:ocp}) is {\em strictly pre-dissipative} at a controlled equilibrium $(x_e,u_e)$, if there exist a storage function $V$ on $\calH$ and a dissipation rate $\alpha\in\calK$ such that for all $x\in\dom A$ and all $u\in\calU$ we have
\begin{equation}\label{e:strdiss}
V'(x)(Ax+Bu)\,\le\,\ell(x,u) - \ell(x_e,u_e) - \alpha(\|x-x_e\|).
\end{equation}
The OCP (\ref{e:ocp}) is called {\em strictly dissipative} at $(x_e,u_e)$ if it is strictly pre-dissipative with a storage function $V$ that is bounded from below.
\end{defn}

We close this section by providing two characterizations of strict dissipativity.

\begin{lem}\label{l:at0}
The OCP (\ref{e:ocp}) is strictly pre-dissipative \braces{dissipative} at $(x_e,u_e)$ with storage function $V$ if and only if the OCP (\ref{e:ocp0}) is strictly pre-dissipative \braces{dissipative} at $(0,0)$ with storage function $\ti V = V(\,\cdot\,+x_e)$.
\end{lem}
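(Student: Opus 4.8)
The plan is to reduce the statement to a single algebraic identity between the two stage costs, together with the fact that translating the state by $x_e$ and the control by $u_e$ is a bijection preserving $\dom A$.

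First I would record the completion-of-squares identity
\[
\tilde\ell(x-x_e,\,u-u_e)\;=\;\ell(x,u)-\ell(x_e,u_e),\qquad x\in\calH,\ u\in\calU,
\]
obtained by expanding $\|C(x-x_e)\|^2$ and $\|K(u-u_e)\|^2$ in the definition of $\tilde\ell$, rewriting $\Re\<Cx,Cx_e\>=\Re\<x,C^*Cx_e\>$ and similarly for $K$, and observing that the linear cross-terms produced this way cancel exactly the linear terms that were added to pass from $\ell$ to $\tilde\ell$; the surviving constants assemble into $-\ell(x_e,u_e)$. In particular $\tilde\ell(0,0)=0$, so the stage cost of (\ref{e:ocp0}) vanishes at the controlled equilibrium $(0,0)$ of $\dot x=Ax+Bu$.

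Next I would observe that $\tilde V=V(\,\cdot\,+x_e)$, being the composition of $V$ with a translation, is $C^1$ if and only if $V$ is, with $\tilde V'(y)h=V'(y+x_e)h$, and is bounded below if and only if $V$ is; hence $\tilde V$ is an admissible (resp.\ bounded-below) storage function exactly when $V$ is. Then comes the change of variables: since $x_e\in\dom A$, the map $y\mapsto x:=y+x_e$ is a bijection of $\dom A$ onto itself and $u\mapsto\hat u:=u+u_e$ is a bijection of $\calU$ onto itself, and from $Ax_e+Bu_e=0$ one gets $A(y+x_e)+B(u+u_e)=Ay+Bu$, so $\tilde V'(y)(Ay+Bu)=V'(x)(Ax+B\hat u)$. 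Substituting this, the identity above, and $\|y\|=\|x-x_e\|$ into the defining inequality of strict pre-dissipativity of (\ref{e:ocp0}) at $(0,0)$ with $\tilde V$ and rate $\alpha$,
\[
\tilde V'(y)(Ay+Bu)\;\le\;\tilde\ell(y,u)-\tilde\ell(0,0)-\alpha(\|y\|),
\]
turns it, for all $x\in\dom A$ and $\hat u\in\calU$, into
\[
V'(x)(Ax+B\hat u)\;\le\;\ell(x,\hat u)-\ell(x_e,u_e)-\alpha(\|x-x_e\|),
\]
which is strict pre-dissipativity of (\ref{e:ocp}) at $(x_e,u_e)$ with $V$ and the same $\alpha$; running the substitution backwards yields the converse, and the bounded-below case follows from the preceding remark. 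I do not expect a genuine analytic obstacle here: the entire content is the quadratic completion, and the only point requiring care is to keep the two stage costs $\ell$ and $\tilde\ell$ distinct so that the identity is applied in the correct direction, and to note that $x\in\dom A$ is equivalent to $x-x_e\in\dom A$.
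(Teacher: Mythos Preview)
Your proof is correct and follows essentially the same approach as the paper: both rely on the change of variables $x\mapsto x+x_e$, $u\mapsto u+u_e$ (using $Ax_e+Bu_e=0$ and $x_e\in\dom A$) together with the algebraic identity $\ell(x+x_e,u+u_e)-\ell(x_e,u_e)=\tilde\ell(x,u)$. The paper derives this identity inline during the chain of inequalities, whereas you isolate it first as a ``completion-of-squares'' lemma and then substitute, but the content is identical.
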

\begin{proof}
Let OCP (\ref{e:ocp}) be strictly pre-dissipative at $(x_e,u_e)$ with storage function $V$. Then for all $x\in\dom A$ and $u\in\calU$ we have
\begin{align*}
\ti V'(x)(Ax + Bu)
&= V'(x+x_e)\big(A(x+x_e) + B(u+u_e)\big)\\
&\le \ell(x+x_e,u+u_e) - \ell(x_e,u_e) - \alpha(\|x\|)\\
&= \|C(x+x_e)\|^2 + \|K(u+u_e)\|^2 + 2\Re\<z,x+x_e\> 
\\&\quad + 2\Re\<v,u+u_e\>
- \|Cx_e\|^2 - \|Ku_e\|^2 - 2\Re\<z,x_e\> 
\\&\quad - 2\Re\<v,u_e\> - \alpha(\|x\|)\\
&= \|Cx\|^2 + 2\Re\<Cx,Cx_e\> + \|Ku\|^2 + 2\Re\<Ku,Ku_e\> 
\\&\quad+ 2\Re\<z,x\> + 2\Re\<v,u\> - \alpha(\|x\|)\\
&= \|Cx\|^2 + \|Ku\|^2 + 2\Re\<z+C^*Cx_e,x\> 
\\&\quad + 2\Re\<v+K^*Ku_e,u\> - \alpha(\|x\|)\\
&= \ti\ell(x,u) - \alpha(\|x\|).
\end{align*}
Since $\ti\ell(0,0) = 0$, it follows that OCP \eqref{e:ocp0} is strictly pre-dissipative at $(0,0)$ with storage function $\ti V$. The proof of the opposite implication follows similar lines.
\end{proof}

In what follows, we denote the mild solution to
\begin{equation}\label{e:ivp}
\dot x = Ax + Bu,\quad x(0) = x_0\in\calH
\end{equation}
by $x_u(\,\cdot\,;x_0)$, i.e.,
$$
x_u(t;x_0) = T(t)x_0 + \int_0^t T(t-s)Bu(s)\,ds,
$$
where $T(\cdot)$ denotes the $C_0$-semigroup generated by $A$.

\begin{lem}
The OCP \eqref{e:ocp} is strictly pre-dissipative at $(x_e,u_e)$ with storage function $V$ and a continuous dissipation rate $\alpha\in\calK$ if and only if for any $x_0\in\calH$, any $u\in L^2_{\rm loc}([0,\infty),\calU)$, and any $0\le t_1 < t_2$ we have
\begin{align}\label{e:strdiss_sol}
\begin{split}
V(x_u(t_2;x_0)) &- V(x_u(t_1;x_0))\\
&\le\,\int_{t_1}^{t_2}\!\!\big[\ell(x_u(t;x_0),u(t)) - \ell(x_e,u_e) - \alpha(\|x_u(t;x_0) - x_e\|)\big]\,dt.
\end{split}
\end{align}
\end{lem}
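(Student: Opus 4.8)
The plan is to prove the two implications separately. The forward implication requires a regularization of the data, while the converse is essentially a one–sided differentiation at $t=0$. I begin with ``$\Rightarrow$'' and first treat regular data: assume $x_0\in\dom A$ and $u\in C^1([0,t_2];\calU)$. Then $t\mapsto Bu(t)$ is $C^1$, so by the standard regularity theory for the inhomogeneous abstract Cauchy problem the mild solution $x_u(\,\cdot\,;x_0)$ is in fact classical: it is $C^1$ in $t$, and $x_u(t;x_0)\in\dom A$ with $\dot x_u(t;x_0)=Ax_u(t;x_0)+Bu(t)$ for all $t\in[0,t_2]$. Since $V$ is $C^1$, the scalar function $t\mapsto V(x_u(t;x_0))$ is $C^1$ with derivative $V'(x_u(t;x_0))\big(Ax_u(t;x_0)+Bu(t)\big)$ by the chain rule; inserting \eqref{e:strdiss} pointwise (legitimate since $x_u(t;x_0)\in\dom A$) and integrating over $[t_1,t_2]$ gives \eqref{e:strdiss_sol} for such data.

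To remove the regularity assumptions, fix $0\le t_1<t_2$, $x_0\in\calH$, $u\in L^2_{\rm loc}([0,\infty),\calU)$, and choose $x_0^n\in\dom A$ with $x_0^n\to x_0$ in $\calH$ and $u^n\in C^1([0,t_2];\calU)$ with $u^n\to u$ in $L^2(0,t_2;\calU)$; apply the special case to $(x_0^n,u^n)$ and pass to the limit. From the mild–solution formula and Cauchy--Schwarz, together with the bound $\sup_{0\le t\le t_2}\|T(t)\|<\infty$ and boundedness of $B$, one gets $x_{u^n}(\,\cdot\,;x_0^n)\to x_u(\,\cdot\,;x_0)$ uniformly on $[0,t_2]$. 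Then: continuity of $V$ handles the two boundary terms on the left-hand side; uniform convergence of the trajectories, boundedness of $C$, and uniform continuity of $\alpha$ on a compact interval handle the terms $\|Cx_u(t)\|^2$ and $\alpha(\|x_u(t)-x_e\|)$; and $u^n\to u$ in $L^2(0,t_2;\calU)$ — hence $Ku^n\to Ku$ in $L^2$ and $u^n\to u$ in $L^1(0,t_2;\calU)$ — handles $\|Ku(t)\|^2$ and $2\Re\<v,u(t)\>$. Passing to the limit in the inequality yields \eqref{e:strdiss_sol}.

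For ``$\Leftarrow$'', fix $x\in\dom A$ and $w\in\calU$, and apply \eqref{e:strdiss_sol} with $x_0=x$, the constant control $u\equiv w$, $t_1=0$ and $t_2=h>0$. As above this is a classical solution, so $\gamma(t):=x_w(t;x)$ is $C^1$ with $\gamma(0)=x$ and $\dot\gamma(0)=Ax+Bw$. Dividing the inequality by $h$ and letting $h\downarrow 0$: the left-hand side tends to $\tfrac{d}{dt}\big|_{t=0}V(\gamma(t))=V'(x)(Ax+Bw)$ since $V\circ\gamma$ is $C^1$, while $\tfrac1h\int_0^h[\ell(\gamma(t),w)-\ell(x_e,u_e)-\alpha(\|\gamma(t)-x_e\|)]\,dt$ tends to $\ell(x,w)-\ell(x_e,u_e)-\alpha(\|x-x_e\|)$ because the integrand is continuous at $t=0$ and takes that value there. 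Since $x\in\dom A$ and $w\in\calU$ are arbitrary, \eqref{e:strdiss} follows. The main obstacle is in the forward direction: legitimizing the passage from the pointwise infinitesimal inequality to the trajectory inequality although mild solutions are in general neither differentiable nor $\dom A$-valued, that is, correctly combining the classical–solution regularization with the ensuing density and limiting argument; the converse is routine.
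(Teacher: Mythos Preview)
Your proof is correct and follows essentially the same approach as the paper: establish \eqref{e:strdiss_sol} first for classical solutions via the chain rule and then pass to the limit along $\dom A\times C^1$-approximations for the forward direction, and differentiate \eqref{e:strdiss_sol} at $t=0$ with a constant control for the converse. Your treatment of the limit passage is in fact a little more detailed than the paper's, which simply appeals to continuity of $V$, $\alpha$, and $\ell$ together with $\|x_n-x\|_\infty\to 0$ and $\|u_n-u\|_2\to 0$.
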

\begin{proof}
$\Lra$: Assume that (\ref{e:ocp}) is strictly pre-dissipative at $(x_e,u_e)$ with storage function $V$ and $\alpha\in\calK$. Set $J := [t_1,t_2]$, $\tilde J := [0,t_2]$, and denote the $L^p$-norm on $\tilde J$ by $\|\cdot\|_p$, $p\in [1,\infty]$. Let $u\in L^2(J,U)$ and $x_0\in\calH$. Since $\dom A$ is dense in $\calH$, there exists a sequence $(z_n)\subset\dom A$ such that $z_n\to x_0$ as $n\to\infty$. Moreover, by the density of the step functions in $L^2(\tilde J,U)$ \cite[Proposition 23.2, p407]{Zeidler19902a}, see also \cite[Lemma 2.1]{Schiela2013}, $C^1(\tilde J,U)$ is dense in $L^2(\tilde J,U)$ so that we find a sequence $(u_n)\subset C^1(\tilde J,U)$ such that $u_n\to u$ in $L^2(\tilde J,U)$. We set $x(t) := x_u(t;x_0)$ and $x_n(t) := x_{u_n}(t;z_n)$, $t\ge 0$. For $t\in J$ we have
\begin{align*}
\|x_n(t) - x(t)\|
&= \left\|T(t)(z_n-x_0) + \int_0^t T(t-s)B(u(s)-u_n(s))\,ds\right\|\\
&\le M\left(\|z_n-x_0\| + \|B\|\|u_n-u\|_1\right),
\end{align*}
where $M = \sup\{\|T(t)\| : t\in [0,t_2]\}$. Hence, $x_n\to x$ uniformly on $J$. Moreover, $x_n$ is a classical solution of \eqref{e:ivp} by \cite[Theorem 3.1.3]{CurtainZwart}, i.e., $x_n$ is a continuously differentiable curve, i.e., $x_n \in C^1(0,T;X)\cap C(0,T;\dom (A))$ and $\dot x_n(t) = Ax_n(t) + Bu_n(t)$ holds for every $t$. Therefore, $V\circ x_n$ is continuously differentiable with
\begin{align*}
\tfrac d{dt}V(x_n(t))
&= V'(x_n(t))(Ax_n(t)+Bu_n(t))\,\\
&\le\,\ell(x_n(t),u_n(t)) - \ell(x_e,u_e) - \alpha(\|x_n(t)-x_e\|)
\end{align*}
by strict pre-dissipativity. Integrating this over $J$ gives
\begin{align*}
V(&x_n(t_2)) - V(x_n(t_1))\,\le\,\int_{t_1}^{t_2}\big[\ell(x_n,u_n) - \ell(x_e,u_e) - \alpha(\|x_n(t)-x_e\|)\big]\,dt.
\end{align*}
By continuity of $V$, $\alpha$ and $\ell$, $\|x_n-x\|_\infty\to 0$ and $\|u_n-u\|_2\to 0$ as $n\to\infty$, all terms in this inequality tend to the corresponding ones in \eqref{e:strdiss_sol}.

$\Lla$: Assume that the condition \eqref{e:strdiss_sol} holds and let $x_0\in\dom A$ and $u_0\in U$ be arbitrary. Then the mild solution $x := x_u(\,\cdot\,,x_0)$ corresponding to $u(t) := u_0$ for $t\ge 0$ is a classical solution satisfying \eqref{e:strdiss_sol} with $t_1=0$. Dividing \eqref{e:strdiss_sol} by $t_2$ and letting $t_2\to 0$ yields
\begin{align*}
V'(x_0)(Ax_0+Bu_0) &= \tfrac d{dt}V(x(t))\big|_{t=0}\,\le\,\ell(x_0,u_0) - \ell(x_e,u_e) - \alpha(\|x_0-x_e\|),
\end{align*}
which is strict pre-dissipativity of \eqref{e:ocp} at $(x_e,u_e)$.
\end{proof}

\section{Quadratic storage functions}
\label{sec:QSF}
In this paper, we are especially interested in quadratic storage functions for strict (pre-)dissipa\-ti\-vi\-ty. A function $V : \calH\to\R$ will be called {\em quadratic} if it is of the form
$$
V_{P,w}(x) := \<Px,x\> + 2\Re\<w,x\>
$$
with a self-adjoint operator $P\in L(\calH)$ and $w\in\calH$. With regard to strict dissipativity it is of particular interest to characterize the quadratic storage functions which are bounded from below.

\begin{lem}\label{l:sf_charac}
The quadratic function $V_{P,w}$ is bounded from below if and only if $P$ is non-negative and $w\in\ran P^{1/2}$. $V_{P,w}$ has a minimizer if and only if $w\in\ran P$, in which case $P^{-1}\{-w\}$ is the set of minimizers.
\end{lem}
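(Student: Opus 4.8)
The plan is to prove the two equivalences separately, treating the boundedness-from-below statement first and the existence-of-minimizer statement second, reducing both to the spectral/functional-calculus structure of the non-negative part of $P$.

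First I would observe that if $P$ is \emph{not} non-negative, then there is some $x_0$ with $\langle Px_0,x_0\rangle < 0$, and plugging in $tx_0$ gives $V_{P,w}(tx_0) = t^2\langle Px_0,x_0\rangle + 2t\,\Re\langle w,x_0\rangle \to -\infty$ as $t\to\infty$, so $V_{P,w}$ is unbounded below. Hence we may assume $P\ge 0$. Now write, using the square root, $\langle Px,x\rangle = \|P^{1/2}x\|^2$, and try to complete the square: if $w = P^{1/2}\xi$ for some $\xi\in\calH$, then
\begin{align*}
V_{P,w}(x) &= \|P^{1/2}x\|^2 + 2\Re\langle P^{1/2}\xi,x\rangle = \|P^{1/2}x\|^2 + 2\Re\langle \xi,P^{1/2}x\rangle\\
&= \|P^{1/2}x + \xi\|^2 - \|\xi\|^2 \ge -\|\xi\|^2,
\end{align*}
which gives boundedness below. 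The converse direction — that boundedness below forces $w\in\ran P^{1/2}$ — is the main obstacle, since $\ran P^{1/2}$ need not be closed. The idea is: suppose $w\notin\ran P^{1/2}$. Decompose $w = w_0 + w_1$ with $w_0\in\overline{\ran P^{1/2}} = \overline{\ran P} = (\ker P)^\perp$ and $w_1\in\ker P = \ker P^{1/2}$. If $w_1\ne 0$, choosing $x = tw_1$ gives $V_{P,w}(tw_1) = 0 + 2t\|w_1\|^2\to-\infty$ as $t\to-\infty$, contradiction; so $w = w_0\in\overline{\ran P^{1/2}}\setminus\ran P^{1/2}$. Then I would use the spectral theorem for $P^{1/2}$ restricted to $(\ker P)^\perp$ to produce, for any $N$, a vector $x_N$ with $\|P^{1/2}x_N\|^2$ bounded but $\Re\langle w,x_N\rangle \to -\infty$: concretely, since $w\notin\ran P^{1/2}$ means $\int \lambda^{-2}\,d\|E_\lambda w\|^2 = \infty$ (where $E$ is the spectral measure of $P^{1/2}$), one takes $x_N := -\int_{1/N}^{\infty}\lambda^{-1}\,dE_\lambda w$; then $\|P^{1/2}x_N\|^2 = \int_{1/N}^\infty d\|E_\lambda w\|^2 \le \|w\|^2$ stays bounded while $\Re\langle w,x_N\rangle = -\int_{1/N}^\infty \lambda^{-1} d\|E_\lambda w\|^2\to-\infty$, so $V_{P,w}(x_N)\to-\infty$. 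This is the delicate step and I expect it to require care with the spectral integrals.

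For the minimizer statement, I would argue that $x^*$ minimizes $V_{P,w}$ iff $V_{P,w}'(x^*) = 0$, i.e. $\Re\langle h, Px^* + w\rangle = 0$ for all $h\in\calH$, which (taking $h$ and $ih$ in the complex case) is equivalent to $Px^* + w = 0$, i.e. $w = -Px^*\in\ran P$; convexity of $V_{P,w}$ (from $P\ge 0$, which we may assume since otherwise there is no minimizer and also $\ran P$-membership would still need $P\ge 0$ for the forward direction — actually if $w\in\ran P$ then $w=Px^*$ and $V_{P,w}(x) = \|P^{1/2}(x-x^*)\|^2 - \langle Px^*,x^*\rangle + \cdots$; let me just say one checks directly $V_{P,w}(x) - V_{P,w}(x^*) = \langle P(x-x^*),x-x^*\rangle$ when $w=-Px^*$, so if this is ever negative then $P\not\ge 0$ and there is no minimizer, consistent with the claim) guarantees the stationary point is a global minimum. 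Conversely if $w\in\ran P$, any $x^*\in P^{-1}\{-w\}$ satisfies $V_{P,w}(x) - V_{P,w}(x^*) = \langle P(x-x^*),x-x^*\rangle\ge 0$ provided $P\ge 0$ — and if $P\not\ge0$ the function is unbounded below hence has no minimizer, so in all cases $P^{-1}\{-w\}$ is exactly the minimizer set. Finally I would note that $w\in\ran P\subset\ran P^{1/2}$ recovers consistency with the first part, since having a minimizer is strictly stronger than being bounded below.
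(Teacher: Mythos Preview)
Your overall strategy for the boundedness equivalence matches the paper's: show $P\ge 0$ is necessary, complete the square for sufficiency, eliminate the $\ker P$-component of $w$, then invoke the spectral theorem on $(\ker P)^\perp$. However, your explicit choice $x_N := -\int_{1/N}^{\infty}\lambda^{-1}\,dE_\lambda w$ (with $E$ the spectral measure of $P^{1/2}$) does not work as stated. You correctly note that $w\notin\ran P^{1/2}$ is equivalent to $\int_0^\infty \lambda^{-2}\,d\|E_\lambda w\|^2 = \infty$, but your claim $\Re\langle w,x_N\rangle = -\int_{1/N}^\infty \lambda^{-1}\,d\|E_\lambda w\|^2\to -\infty$ requires the \emph{weaker} integral $\int_0^\infty \lambda^{-1}\,d\|E_\lambda w\|^2$ to diverge, and this need not follow: a finite measure on $(0,1]$ with density proportional to $\lambda^{1/2}$ near $0$ satisfies $\int\lambda^{-2}\,d\nu=\infty$ but $\int\lambda^{-1}\,d\nu<\infty$ (equivalently, $w$ may lie in $\ran P^{1/4}\setminus\ran P^{1/2}$). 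The fix is to replace $\lambda^{-1}$ by $\lambda^{-2}$ in the definition of $x_N$; then $\|P^{1/2}x_N\|^2 = \int_{1/N}^\infty \lambda^{-2}\,d\|E_\lambda w\|^2$ and $2\Re\langle w,x_N\rangle = -2\int_{1/N}^\infty \lambda^{-2}\,d\|E_\lambda w\|^2$, so $V_{P,w}(x_N) = -\int_{1/N}^\infty \lambda^{-2}\,d\|E_\lambda w\|^2 \to -\infty$. This corrected $x_N$ is exactly the paper's choice $x_n = -P^{-1}\PP_n w$ (with $\PP_n$ the spectral projection of $P$ on $[1/n,\|P\|]$) rewritten in terms of $E$; the paper argues the direct implication via bounded\-ness of $\|P^{-1/2}w_n\|$ and weak compactness rather than your contrapositive, but the underlying construction coincides.

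For the minimizer statement your route via the first-order condition $Px^*+w=0$ together with the identity $V_{P,w}(x)-V_{P,w}(x^*)=\langle P(x-x^*),x-x^*\rangle$ is correct and somewhat more direct than the paper's argument, which re-uses the completed-square representation and an approximation $P^{1/2}x_n\to -v$ to identify the infimum as $-\|v\|^2$ before deducing $w=-Px^*$. Your parenthetical treatment of the case $P\not\ge 0$ is adequate but would benefit from being stated cleanly up front rather than woven into the computation.
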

\begin{proof}
Let $V = V_{P,w}$. If $P\ge 0$ and $w\in\ran P^{1/2}$, choose $v\in\calH$ such that $w = P^{1/2}v$. Then
\begin{align}\label{e:half}
V(x) = \|P^{1/2}x\|^2 + 2\Re\big\<P^{1/2}x,v\big\> = \|P^{1/2}x + v\|^2 - \|v\|^2 \ge -\|v\|^2.
\end{align}
Conversely, assume that $V$ is bounded from below. Then there exists $c\ge 0$ such that $V+c\ge 0$. It is clear that $P\ge 0$. Write $w = w_1+w_2$ with respect to the orthogonal decomposition
$$
\calH = \ker P\oplus\clran P.
$$
Then $w_1=0$. Indeed, otherwise $V(-\frac{c+1}{2\|w_1\|^{2}}w_1) + c = -1 < 0$. Let $\PP_n$ denote the spectral projection of $P$ corresponding to the interval $[1/n,\|P\|]$ (cf.\ Appendix \ref{as:sa}) and set $w_n := \PP_n w$. Then $w_n\in\ran P$ for any $n$ (see Lemma \ref{l:helpy}(a)) and $w\in\clran P$ implies that $w_n\to w$ as $n\to\infty$. Let $x_n := -P^{-1}w_n$. Then
$$
-c\le V(x_n) = \|P^{1/2}x_n\|^2 + 2\Re\big\<w_n,x_n\big\> = - \|P^{-1/2}w_n\|^2.
$$
Hence, $(P^{-1/2}w_n)$ is bounded and therefore possesses a weakly convergent subsequence, i.e., $P^{-1/2}w_{n_k}\wto z$ for some $z\in\calH$. But this implies $w_{n_k}\wto P^{1/2}z$ and therefore $w = P^{1/2}z\in\ran P^{1/2}$.

If $w\in\ran P$ and $v\in P^{-1}\{-w\}$, then $V(x) = \|P^{1/2}x - P^{1/2}v\|^2 - \|P^{1/2}v\|^2\ge -\|P^{1/2}v\|^2$ for all $x\in\calH$ and $x^* = v$ is a minimizer. Conversely, assume that $V$ has a minimizer $x^*$. Then $w = P^{1/2}v\in\ran P^{1/2}$ since $V$ is bounded below. WLOG, we may assume that $v\in\ol{\ran}P = \ol{\ran}P^{1/2}$, so that there exists a sequence $(x_n)\subset\calH$ such that $P^{1/2}x_n\to -v$. Hence, \eqref{e:half} shows that $\inf\{V(x) : x\in\calH\} = -\|v\|^2$. Therefore, $V(x^*) = -\|v\|^2$ and \eqref{e:half} implies $v = -P^{1/2}x^*$. Thus, $w = P^{1/2}v  = -Px^*\in\ran P$ and $x^*\in P^{-1}\{-w\}$.
\end{proof}

Note that the running cost
$$
\ell(x,u) = \left\<\bmat{C^*C}00{K^*K}\bvec xu,\bvec xu\right\> + 2\Re\left\<\bvec zv,\bvec xu\right\>
$$
is itself a quadratic storage function. Hence, the following corollary is readily derived from Lemma \ref{l:sf_charac}.

\begin{cor}\label{c:ellbb}
The running cost function $\ell$ is bounded from below if and only if $z\in\ran C^*$ and $v\in\ran K^*$.
\end{cor}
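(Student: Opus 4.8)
The plan is to view the running cost as a quadratic storage function on the product Hilbert space $\calH\times\calU$ and apply Lemma~\ref{l:sf_charac}. Writing $\calX := \calH\times\calU$, $P := \sbmat{C^*C}00{K^*K}\in L(\calX)$, and $w := \sbvec zv\in\calX$, we have $\ell = V_{P,w}$, as already observed above. Since $C^*C\ge 0$ and $K^*K\ge 0$, the operator $P$ is non-negative, so Lemma~\ref{l:sf_charac} tells us that $\ell$ is bounded from below if and only if $w\in\ran P^{1/2}$.

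Next I would compute $P^{1/2}$. Because $P$ is block-diagonal and $\sbmat{(C^*C)^{1/2}}00{(K^*K)^{1/2}}$ is a non-negative self-adjoint operator on $\calX$ whose square is $P$, uniqueness of the non-negative square root gives $P^{1/2} = \sbmat{(C^*C)^{1/2}}00{(K^*K)^{1/2}}$. Hence $w = \sbvec zv\in\ran P^{1/2}$ if and only if $z\in\ran(C^*C)^{1/2}$ and $v\in\ran(K^*K)^{1/2}$.

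It then remains to identify these ranges. The key (and essentially only nontrivial) step is the operator-theoretic identity $\ran(C^*C)^{1/2} = \ran C^*$, valid for any $C\in L(\calH,\calY)$. This follows from the polar decomposition $C = U|C|$, where $|C| := (C^*C)^{1/2}$ and $U$ is a partial isometry with initial space $\clran|C|$: from $C^* = |C|U^*$ one gets $\ran C^*\subseteq\ran|C|$, and from $|C| = U^*C$ one gets $\ran|C|\subseteq\ran C^*$. (Alternatively one may invoke Douglas' range-inclusion theorem, noting $C^*C = |C|^2$.) Applying this to $C$ and to $K$ yields $\ran(C^*C)^{1/2} = \ran C^*$ and $\ran(K^*K)^{1/2} = \ran K^*$, and combining with the previous paragraph gives the claim.

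I do not expect a serious obstacle: the whole argument is a reduction to Lemma~\ref{l:sf_charac} together with the standard fact $\ran(T^*T)^{1/2} = \ran T^*$. The only point needing a little care is that this last fact, though well known, is not recorded in the excerpt, so one must either cite it or include the short polar-decomposition argument above.
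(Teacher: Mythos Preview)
Your proof is correct and follows exactly the paper's approach: apply Lemma~\ref{l:sf_charac} to the block-diagonal operator and then use $\ran(C^*C)^{1/2}=\ran C^*$ and $\ran(K^*K)^{1/2}=\ran K^*$. The only difference is cosmetic: the paper cites \cite[VI.2.7]{Kato2013} for these range identities, while you supply the short polar-decomposition argument.
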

\begin{proof}
The claim follows from $\ran(C^*C)^{1/2} = \ran C^*$ and $\ran(K^*K)^{1/2} = \ran K^*$, see \cite[VI.2.7]{Kato2013}.
\end{proof}

\begin{lem}\label{l:quad_at0}
The OCP \eqref{e:ocp} is strictly pre-dissipative at $(x_e,u_e)$ with storage function $V_{P,w}$ if and only if the OCP \eqref{e:ocp0} is strictly pre-dissipative at $(0,0)$ with storage function $V_{P,\ti w}$, where $\ti w = w+Px_e$.
\end{lem}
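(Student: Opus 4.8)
The plan is to deduce this from Lemma~\ref{l:at0}. That lemma already tells us that OCP~\eqref{e:ocp} is strictly pre-dissipative at $(x_e,u_e)$ with storage function $V_{P,w}$ if and only if OCP~\eqref{e:ocp0} is strictly pre-dissipative at $(0,0)$ with storage function $\ti V := V_{P,w}(\,\cdot\,+x_e)$. So the whole content of the lemma is to identify $\ti V$ with $V_{P,\ti w}$ up to an additive constant, and then to observe that such a constant is irrelevant for strict pre-dissipativity.

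\textbf{Key steps.} First I would expand
$$
\ti V(x) = \<P(x+x_e),x+x_e\> + 2\Re\<w,x+x_e\>,
$$
and use the self-adjointness of $P$ to rewrite the cross terms $\<Px,x_e\> + \<Px_e,x\> = 2\Re\<Px_e,x\>$. This yields
$$
\ti V(x) = \<Px,x\> + 2\Re\<w+Px_e,x\> + \big(\<Px_e,x_e\> + 2\Re\<w,x_e\>\big) = V_{P,\ti w}(x) + c_0,
$$
with $\ti w = w + Px_e$ and the constant $c_0 := \<Px_e,x_e\> + 2\Re\<w,x_e\>\in\R$. Second, since $\ti V$ and $V_{P,\ti w}$ differ only by the constant $c_0$, they have the same (Fréchet) derivative, $\ti V'(x) = V_{P,\ti w}'(x) = 2\Re\<\,\cdot\,,Px+\ti w\>$; both are in particular $C^1$, hence legitimate storage functions. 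Third, the defining inequality \eqref{e:strdiss} of strict pre-dissipativity involves only $V'$, not $V$ itself, so OCP~\eqref{e:ocp0} is strictly pre-dissipative at $(0,0)$ with storage function $\ti V$ if and only if it is strictly pre-dissipative at $(0,0)$ with storage function $V_{P,\ti w}$ (with the same $\alpha\in\calK$). Combining these three observations with Lemma~\ref{l:at0} gives the claim.

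\textbf{Main obstacle.} There is essentially no real obstacle here; the argument is a routine computation together with the bookkeeping of an additive constant. The only point requiring a little care is the cross-term simplification, where self-adjointness of $P$ (so that $\<Px,x_e\>$ and $\<Px_e,x\>$ are complex conjugates) is exactly what is needed to absorb everything into a single linear term $2\Re\<w+Px_e,x\>$ in the complex case; in the real case this is immediate.
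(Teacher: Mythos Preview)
Your proposal is correct and matches the paper's proof essentially verbatim: the paper also expands $V_{P,w}(x+x_e)$ to obtain $V_{P,\ti w}(x) + (\<Px_e,x_e\> + 2\Re\<w,x_e\>)$ and then invokes Lemma~\ref{l:at0}. You have simply spelled out more explicitly why the additive constant is irrelevant (only $V'$ enters the dissipativity inequality), which the paper leaves implicit.
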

\begin{proof}
For $x\in\calH$ we have
\begin{align*}
V_{P,w}(x+x_e)
&= \<P(x+x_e),x+x_e\> + 2\Re\<w,x+x_e\>\\
&= \<Px,x\> + 2\Re\<Px_e,x\> + \<Px_e,x_e\> + 2\Re\<w,x\> + 2\Re\<w,x_e\>\\
&= V_{P,\ti w}(x) + (\<Px_e,x_e\> + 2\Re\<w,x_e\>).
\end{align*}
Hence, the claim follows from Lemma \ref{l:at0}.
\end{proof}

The next proposition shows in particular that strict pre-dissipativity is subject to an algebraic constraint on the steady state and the linear factors in the stage cost.

\begin{prop}\label{p:vw}
For $P=P^*\in L(\calH)$ and $w\in\calH$ the following statements are equivalent:
\begin{enumerate}
\item[{\rm (i)}]  The OCP \eqref{e:ocp} is strictly pre-dissipative at $(x_e,u_e)$ with storage function $V_{P,w}$ and dissipation rate $\alpha\in\calK$.
\item[{\rm (ii)}] We have
\begin{align}\label{e:alg}
\begin{split}
&w+Px_e\in\dom A^*,\\
& A^*(w+Px_e) = z+C^*Cx_e,
\\& B^*(w+Px_e) = v+K^*Ku_e
\end{split}
\end{align}
and for all $x\in\dom A$ and all $u\in\calU$,
\begin{equation}\label{e:reduced}
2\Re\<Ax+Bu,Px\>\,\le\,\|Cx\|^2 + \|Ku\|^2 - \alpha(\|x\|).
\end{equation}
\end{enumerate}
\end{prop}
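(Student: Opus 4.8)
The plan is to reduce to the equilibrium $(0,0)$ via Lemma~\ref{l:quad_at0} and then to decouple the affine part of the pre-dissipativity inequality from its quadratic part by a scaling argument.

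By Lemma~\ref{l:quad_at0}, statement~(i) is equivalent to strict pre-dissipativity of the OCP~\eqref{e:ocp0} at $(0,0)$ with storage function $V_{P,\ti w}$, where $\ti w:=w+Px_e$; inspecting the proof of Lemma~\ref{l:at0} one sees the dissipation rate is again $\alpha$. Put $\ti z:=z+C^*Cx_e$ and $\ti v:=v+K^*Ku_e$, so that the running cost of~\eqref{e:ocp0} is $\ti\ell(x,u)=\|Cx\|^2+\|Ku\|^2+2\Re\<\ti z,x\>+2\Re\<\ti v,u\>$ with $\ti\ell(0,0)=0$. Since $V_{P,\ti w}'(x)h=2\Re\<h,Px+\ti w\>$, the pre-dissipativity inequality for~\eqref{e:ocp0} at $(0,0)$, after rearranging and using $\Re\<\ti z,x\>=\Re\<x,\ti z\>$ (and likewise for $\ti v$), reads
\[
\Lambda(x,u)\ \le\ \Theta(x,u)-\alpha(\|x\|),\qquad x\in\dom A,\ u\in\calU,
\]
where $\Lambda(x,u):=2\Re\big[\<Ax,\ti w\>-\<x,\ti z\>+\<Bu,\ti w\>-\<u,\ti v\>\big]$ is $\R$-linear in $(x,u)$ while $\Theta(x,u):=\|Cx\|^2+\|Ku\|^2-2\Re\<Ax+Bu,Px\>$ satisfies $\Theta(tx,tu)=t^2\Theta(x,u)$ for $t\in\R$.

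The core step is to show this inequality is equivalent to the conjunction of $\Lambda\equiv 0$ on $\dom A\times\calU$ and $\Theta(x,u)\ge\alpha(\|x\|)$ for all such $x,u$, the latter being precisely~\eqref{e:reduced}. The implication ``$\Leftarrow$'' is immediate. For ``$\Rightarrow$'', fix $x\in\dom A$, $u\in\calU$ and apply the inequality to $(tx,tu)$ for $t\in\R$ (legitimate since $\dom A$ is a subspace); as $\alpha\ge 0$ this gives $t\Lambda(x,u)\le t^2\Theta(x,u)$. Dividing by $t$ and letting $t\downarrow 0$ yields $\Lambda(x,u)\le 0$, while dividing by $t<0$ and letting $t\uparrow 0$ yields $\Lambda(x,u)\ge 0$; hence $\Lambda(x,u)=0$, and the original inequality collapses to $\Theta(x,u)\ge\alpha(\|x\|)$.

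It remains to identify $\Lambda\equiv 0$ with~\eqref{e:alg}. By $\R$-linearity, $\Lambda\equiv 0$ holds iff $\Lambda(\cdot,0)\equiv 0$ on $\dom A$ and $\Lambda(0,\cdot)\equiv 0$ on $\calU$; each is the vanishing of the real part of a $\C$-linear functional, so testing also against $ix$ (resp.\ $iu$) in the complex case — and trivially in the real case — gives $\<Ax,\ti w\>=\<x,\ti z\>$ for all $x\in\dom A$ and $\<Bu,\ti w\>=\<u,\ti v\>$ for all $u\in\calU$. Since $A$ is densely defined, the first identity means exactly $\ti w\in\dom A^*$ with $A^*\ti w=\ti z$, and the second means $B^*\ti w=\ti v$; substituting back $\ti w=w+Px_e$, $\ti z=z+C^*Cx_e$, $\ti v=v+K^*Ku_e$ recovers~\eqref{e:alg}, while conversely~\eqref{e:alg} obviously forces $\Lambda\equiv 0$. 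I do not anticipate a real obstacle: the only mildly delicate points are the scaling argument separating the linear from the quadratic contribution (which uses only $\alpha\ge 0$, not any finer property of $\alpha\in\calK$) and the bookkeeping of inner-product conventions in the complex setting.
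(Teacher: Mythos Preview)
Your proof is correct and follows essentially the same route as the paper: reduce to the equilibrium $(0,0)$ via Lemma~\ref{l:quad_at0}, then use a scaling argument to force the linear part of the inequality to vanish, leaving the quadratic inequality~\eqref{e:reduced}. The only cosmetic difference is that you scale $(x,u)\mapsto(tx,tu)$ jointly, whereas the paper first sets $x=0$ to handle the $u$-part and then scales $x\mapsto ax$ with $u=0$; both lead to the same identifications $\ti w\in\dom A^*$, $A^*\ti w=\ti z$, $B^*\ti w=\ti v$.
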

\begin{proof}
By Lemma \ref{l:quad_at0}, (i) is equivalent to
\begin{enumerate}
\item[(i')] The OCP \eqref{e:ocp0} is strictly pre-dissipative at $(0,0)$ with storage function $V_{P,\ti w}$, $\ti w = w+Px_e$.
\end{enumerate}
Moreover, $\ti\ell(0,0)=0$, and hence (i') means that for all $x\in\dom A$ and $u\in\calU$,
\begin{align}\label{e:hotz}
\begin{split}
2\Re\<Ax+Bu,Px+\ti w\>
&\le\,\|Cx\|^2 + \|Ku\|^2 + 2\Re\<z+C^*Cx_e,x\>\\
&\qquad\quad\quad\,\, +2\Re\<v+K^*Ku_e,u\> - \alpha(\|x\|).
\end{split}
\end{align}
(ii)$\Sra$(i'). The combination of \eqref{e:alg} and \eqref{e:reduced} gives \eqref{e:hotz}. Therefore, (i') follows.

\noindent (i')$\Sra$(ii). By setting $x=0$ in \eqref{e:hotz} we obtain
$$
\|Ku\|^2 + 2\Re\<u,v+K^*Ku_e - B^*\ti w\>\,\ge\,0
$$
for all $u\in\calU$. But this is only possible if $v + K^*Ku_e -B^*\ti w = 0$. Hence, \eqref{e:hotz} simplifies to
\begin{align*}
2\Re&\<Ax+Bu,Px\> + 2\Re\<Ax,\ti w\>\,
\\&\le\,\|Cx\|^2 + \|Ku\|^2 + 2\Re\<z+C^*Cx_e,x\> - \alpha(\|x\|),
\end{align*}
which holds for all $x\in\dom A$ and all $u\in\calU$. Thus, setting $u=0$ yields
$$
2\Re\<Ax,Px\> + 2\Re\<Ax,\ti w\>\,\le\,\|Cx\|^2 + 2\Re\<z+C^*Cx_e,x\>
$$
for all $x\in\dom A$. Let $a>0$ and replace $x$ by $ax$ in the last inequality, i.e.,
\begin{align*}
2a^2\Re\<Ax,Px\> &+ 2a\Re\<Ax,\ti w\>\,\le\,a^2\|Cx\|^2 + 2a\Re\<z+C^*Cx_e,x\>.
\end{align*}
Dividing this by $a$ and letting $a\to 0$ implies that $\Re\<Ax,\ti w\>\,\le\,\Re\<x,z+C^*Cx_e\>$ for all $x\in\dom A$. Replacing $x$ by $-x$ in this inequality shows that in fact $\Re\<Ax,\ti w\> = \Re\<x,z+C^*Cx_e\>$ holds for all $x\in\dom A$. And as
\begin{align*}
\Im\<x,z+C^*Cx_e\> = -\Re\<ix,z+C^*Cx_e\> = -\Re\<iAx,\ti w\> = \Im\<Ax,\ti w\>,
\end{align*}
we obtain $\<Ax,\ti w\> = \<x,z+C^*Cx_e\>$ for all $x\in\dom A$. But this means that $\ti w\in\dom A^*$ and $A^*\ti w = z+C^*Cx_e$. This shows that \eqref{e:alg} holds, and \eqref{e:reduced} follows by plugging \eqref{e:alg} into \eqref{e:hotz}.
\end{proof}

%

\begin{rem}
As the proof above shows, the condition on $z$ and $v$ is even necessary for ``ordinary'' (pre-)dissipativity (i.e.\ $\alpha = 0$) with supply rate $\ell$ (and output $y=x$).
\end{rem}

\begin{thm}\label{t:char}
For $P=P^*\in L(\calH)$ and a controlled equilibrium $(x_e,u_e)$ the following statements are equivalent:
\begin{enumerate}
\item[{\rm (i)}]  There exist $\gamma>0$ and $w\in\calH$ such that the OCP \eqref{e:ocp} is strictly pre-dissipative at $(x_e,u_e)$ with storage function $V_{\gamma P,w}$.
\item[{\rm (ii)}] We have
\begin{equation}\label{e:alg2}
\bvec{z+C^*Cx_e}{v+K^*Ku_e}\,\in\,\ran\bvec{A^*}{B^*}
\end{equation}
and the following Lyapunov-like form inequality:
\begin{align}
\begin{split}\label{e:ext_lyapunov}
\exists\eta,m>0\,&\forall x\in\dom A :\\
&\|Cx\|^2 - 2\eta\Re\<Ax,Px\>\,\ge\,m\|x\|^2.
\end{split}
\end{align}
\end{enumerate}
\end{thm}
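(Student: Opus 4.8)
The plan is to derive Theorem~\ref{t:char} from Proposition~\ref{p:vw} by showing that the existence of a scaling factor $\gamma>0$ and a vector $w$ satisfying the algebraic constraints \eqref{e:alg} together with the scaled reduced inequality \eqref{e:reduced} is equivalent to \eqref{e:alg2} plus the form inequality \eqref{e:ext_lyapunov}. First I would observe that, with $P$ replaced by $\gamma P$ in Proposition~\ref{p:vw}(ii), the algebraic conditions \eqref{e:alg} read $w+\gamma Px_e\in\dom A^*$, $A^*(w+\gamma Px_e)=z+C^*Cx_e$, and $B^*(w+\gamma Px_e)=v+K^*Ku_e$. Setting $\xi:=w+\gamma Px_e$, these say precisely that $\xi\in\dom A^*$ and $(A^*\xi,B^*\xi)^{\!\top}=(z+C^*Cx_e,\,v+K^*Ku_e)^{\!\top}$, i.e.\ that the right-hand vector lies in $\ran\sbvec{A^*}{B^*}$ --- which is condition \eqref{e:alg2}. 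Crucially, given such a $\xi$, one can \emph{always} recover a valid $w$ by setting $w:=\xi-\gamma Px_e$, and this works for \emph{any} choice of $\gamma>0$; so the algebraic part of (i) decouples completely from the scaling factor and is equivalent to \eqref{e:alg2} alone.

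Next I would handle the reduced inequality \eqref{e:reduced}, which for storage function $V_{\gamma P,w}$ becomes
\begin{equation*}
2\gamma\Re\<Ax+Bu,Px\>\,\le\,\|Cx\|^2 + \|Ku\|^2 - \alpha(\|x\|)\qquad\forall x\in\dom A,\ \forall u\in\calU.
\end{equation*}
The key technical step is to eliminate $u$. For fixed $x$, the infimum over $u$ of $\|Ku\|^2-2\gamma\Re\<Bu,Px\>$ must be controlled. Using condition \eqref{e:KB}, i.e.\ $\ran B^*\subset\ran K^*$, one writes $B^*=K^*S$ for a bounded operator $S$ (by the closed range / factorization argument behind \cite[Theorem IV.2.2]{Zabczyk}), so that $\Re\<Bu,Px\>=\Re\<Ku,SPx\>$ and hence $\|Ku\|^2-2\gamma\Re\<Ku,SPx\>=\|Ku-\gamma SPx\|^2-\gamma^2\|SPx\|^2\ge-\gamma^2\|SPx\|^2$, with the bound attained (in the infimum sense, since $Ku$ ranges over a set whose closure contains $\gamma SPx$). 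Thus the $u$-quantified inequality is equivalent to
\begin{equation*}
2\gamma\Re\<Ax,Px\> + \gamma^2\|SPx\|^2\,\le\,\|Cx\|^2 - \alpha(\|x\|)\qquad\forall x\in\dom A.
\end{equation*}
Since $\gamma^2\|SPx\|^2\ge 0$, this in particular forces $2\gamma\Re\<Ax,Px\>\le\|Cx\|^2-\alpha(\|x\|)$; conversely, because $S$ and $P$ are bounded, $\gamma^2\|SPx\|^2\le\gamma^2\|SP\|^2\|x\|^2$ can be absorbed into a suitable choice of dissipation rate $\alpha(\cdot)=m(\cdot)^2$ provided the margin in the form inequality is strictly quadratic and $\gamma$ is taken small enough.

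Putting this together: (i)$\Rightarrow$(ii) follows by taking the $u$-free inequality above, noting $\alpha\in\calK$ gives $\alpha(\|x\|)>0$ for $x\ne 0$, and --- after possibly shrinking --- extracting a genuine quadratic lower bound $m\|x\|^2$ (one uses that $\alpha(\|x\|)\ge 0$ plus the quadratic homogeneity trick: replacing $x$ by $tx$ and comparing the $t^2$-coefficients, exactly as in the proof of Proposition~\ref{p:vw}, upgrades the bound to a homogeneous one with $\eta=\gamma$). For (ii)$\Rightarrow$(i): given $\eta,m>0$ with $\|Cx\|^2-2\eta\Re\<Ax,Px\>\ge m\|x\|^2$, choose $\gamma>0$ so small that $\gamma\le\eta$ and $\gamma^2\|SP\|^2\le\tfrac{m}{2}\cdot\tfrac{\gamma}{\eta}$ (possible since the left side is $O(\gamma^2)$ and the right is $O(\gamma)$); then $2\gamma\Re\<Ax,Px\>+\gamma^2\|SPx\|^2\le\tfrac{\gamma}{\eta}\|Cx\|^2-\tfrac{\gamma m}{\eta}\|x\|^2+\tfrac{\gamma m}{2\eta}\|x\|^2\le\|Cx\|^2-\tfrac{\gamma m}{2\eta}\|x\|^2$, so \eqref{e:reduced} holds for $\gamma P$ with $\alpha(r):=\tfrac{\gamma m}{2\eta}r^2\in\calK$; and \eqref{e:alg2} supplies the required $w$ via $w:=\xi-\gamma Px_e$. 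Then Proposition~\ref{p:vw} gives strict pre-dissipativity of \eqref{e:ocp} at $(x_e,u_e)$ with storage function $V_{\gamma P,w}$.

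The main obstacle I anticipate is the $u$-elimination step done rigorously: the factorization $B^*=K^*S$ with bounded $S$ needs the precise statement of \cite[Theorem IV.2.2]{Zabczyk} (Douglas' lemma), and one must be careful that the infimum over $u$ of $\|Ku-\gamma SPx\|^2$ is indeed $0$ --- this requires $\gamma SPx\in\clran K$, which holds because $SPx$ can be taken in $\ol{\ran}\,S\subset\ol{\ran}(K^* )^*$-type reasoning, or more directly because only the infimum (not the minimum) enters the dissipation inequality, and $\ran K$ is dense in its closure. The homogenization argument upgrading a merely nonnegative margin to a strictly quadratic one $m\|x\|^2$ is the other delicate point, but it is a direct analogue of the scaling computation already carried out in the proof of Proposition~\ref{p:vw} and should transfer with minor changes.
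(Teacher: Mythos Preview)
Your argument is correct and, at the structural level, matches the paper: both directions are reduced to Proposition~\ref{p:vw}, the algebraic conditions \eqref{e:alg} are seen to be equivalent to \eqref{e:alg2} via the substitution $\xi=w+\gamma Px_e$, and the normalization $x\mapsto x/\|x\|$ (your ``homogeneity trick'') extracts $m=\alpha(1)$, $\eta=\gamma$ in the (i)$\Rightarrow$(ii) direction.

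The genuine difference is in how you handle the control variable $u$. You invoke Douglas' lemma to factor $B^*=K^*S$ and compute the exact infimum over $u$, obtaining the sharp equivalent inequality $2\gamma\Re\<Ax,Px\>+\gamma^2\|SPx\|^2\le\|Cx\|^2-\alpha(\|x\|)$. The paper bypasses this entirely: for (i)$\Rightarrow$(ii) it simply sets $u=0$ in \eqref{e:reduced} (so your detour through the infimum and the observation $\gamma^2\|SPx\|^2\ge 0$ is unnecessary there), and for (ii)$\Rightarrow$(i) it uses only the raw estimate $\|Ku\|^2\ge c_K^2\|Bu\|^2$ from \eqref{e:KB} together with $|\Re\<PBu,x\>|\le\|P\|\,\|Bu\|\,\|x\|$, then completes the square in $\|Bu\|$ to get
\[
\|Cx\|^2+\|Ku\|^2-2\gamma\Re\<Ax+Bu,Px\>\;\ge\;\gamma\Big(m-\tfrac{\gamma\|P\|^2}{c_K^2}\Big)\|x\|^2,
\]
positive for small $\gamma$. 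This is more elementary---no operator factorization, no need to worry about whether $SPx\in\clran K$---and yields an explicit constant in terms of $c_K$ and $\|P\|$ rather than $\|SP\|$. Your route, on the other hand, gives the tightest possible $u$-eliminated inequality, which could be useful if one later wanted to characterize the \emph{optimal} $\gamma$; but for the present equivalence the paper's cruder bound suffices and avoids the side conditions you flag in your last paragraph.
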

\begin{proof}
(i)$\Sra$(ii). Condition \eqref{e:alg} in Proposition \ref{p:vw} implies \eqref{e:alg2} and setting $u=0$ in \eqref{e:reduced} yields
$$
\|Cx\|^2 - 2\gamma\Re\<Ax,Px\>\,\ge\,\alpha(\|x\|),\qquad x\in\dom A.
$$
Now, replace $x$ by $x/\|x\|$ in this inequality to obtain \eqref{e:ext_lyapunov} with $\eta = \gamma$ and $m=\alpha(1)$.

\noindent (ii)$\Sra$(i). It is no restriction to assume $\eta = 1$ in \eqref{e:ext_lyapunov}. For $\gamma\in (0,1)$, $x\in\dom A$, and $u\in\calU$ we have
\begin{align*}
\|Cx\|^2 + \|Ku\|^2 &- 2\Re\<Ax+Bu,\gamma Px\>\\
&= \|Cx\|^2 - 2\gamma\Re\<Ax,Px\> + \|Ku\|^2 - 2\gamma\Re\<PBu,x\>\\
&\ge (1-\gamma)\|Cx\|^2 + \gamma m\|x\|^2 + c_K^2\|Bu\|^2 - 2\gamma \|P\|\|Bu\|\|x\|\\
&\ge \left[c_k\|Bu\| - \frac{\gamma\|P\|}{c_K}\|x\|\right]^2 - \frac{\gamma^2\|P\|^2}{c_K^2}\|x\|^2 + \gamma m\|x\|^2\\
&\ge \gamma\left(m - \frac{\gamma\|P\|^2}{c_K^2}\right)\|x\|^2 = c\|x\|^2
\end{align*}
with some $c>0$ is $\gamma$ is sufficiently small. If we set $\alpha(t) := ct^2$, then 
this is \eqref{e:reduced} with $P$ replaced by $\gamma P$. Now, due to \eqref{e:alg2}, there exists $\ti w\in\dom A^*$ such that $A^*\ti w = z+C^*Cx_e$ and $B^*\ti w = v + K^*Ku_e$. Set $w := \ti w - \gamma Px_e$. Then also \eqref{e:alg} holds (with $P$ replaced by $\gamma P$), and therefore Proposition \ref{p:vw} implies that the OCP \eqref{e:ocp} is strictly pre-dissipative at $(x_e,u_e)$ with storage function $V_{\gamma P,w}$.
\end{proof}

Our next aim is to show that \eqref{e:ext_lyapunov} is essentially equivalent to strict pre-dissipativity at a suitably chosen steady state $(x_e,u_e)$. In order to find this steady state, we analyse the optimal steady state problem
\begin{align}\label{e:ssp}
\min_{x \in \dom A,\,u\in\calU} \ell(x,u)\qquad\text{s.t. }Ax+Bu = 0,
\end{align}
which is certainly equivalent to
\begin{align}\label{e:ssp2}
\min_{y\in \ker[A\;B]}\ell(y).
\end{align}

\begin{lem}\label{l:ssp}
Assume that $K$ is coercive and that \eqref{e:ext_lyapunov} holds. Then the optimal steady state problem \eqref{e:ssp} has a unique solution $y^* = (x_e,u_e)$. It satisfies
\begin{equation}\label{e:keinesonne}
\mathbb P_{\ker[A\;B]}\bvec{z+C^*Cx_e}{v+K^*Ku_e} = 0.
\end{equation}
\end{lem}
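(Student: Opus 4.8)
The plan is to view the optimal steady-state problem \eqref{e:ssp2} as the minimization of the quadratic function $\ell$ over the Hilbert space $M := \ker[A\;B]$, endowed with the inner product inherited from $\calH\times\calU$, and to show that the underlying quadratic form is coercive on $M$. This yields existence and uniqueness of the minimizer simultaneously, and \eqref{e:keinesonne} then drops out of the first-order optimality condition.

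First I would check that $M = \ker[A\;B]$ is a \emph{closed} subspace of $\calH\times\calU$: if $(x_n,u_n)\in M$ with $x_n\to x$ in $\calH$ and $u_n\to u$ in $\calU$, then $Ax_n = -Bu_n\to -Bu$ by boundedness of $B$, and closedness of the generator $A$ forces $x\in\dom A$ with $Ax=-Bu$, i.e.\ $(x,u)\in M$. With $Q := \sbmat{C^*C}00{K^*K}\in L(\calH\times\calU)$ (self-adjoint, $\ge 0$) and $r := \sbvec{z}{v}$, the restriction $\ell|_M$ is precisely the quadratic function $V_{Q_M,r_M}$ on $M$, where $r_M := \PP_M r$ and $Q_M := \PP_M Q|_M\in L(M)$ is the compression of $Q$ to $M$, which is self-adjoint and non-negative.

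The heart of the matter is to show that $Q_M$ is \emph{strictly positive}, i.e.\ there is $c>0$ with $\|Cx\|^2+\|Ku\|^2\ge c(\|x\|^2+\|u\|^2)$ for all $(x,u)\in M$. I would argue by contradiction: otherwise there are $(x_n,u_n)\in M$ with $\|x_n\|^2+\|u_n\|^2 = 1$ and $\|Cx_n\|^2+\|Ku_n\|^2\to 0$. Coercivity of $K$ forces $u_n\to 0$, hence $Bu_n\to 0$, hence $Ax_n = -Bu_n\to 0$; then \eqref{e:ext_lyapunov} applied to $x_n\in\dom A$ (together with $\|x_n\|\le 1$) gives
$$
m\|x_n\|^2\,\le\,\|Cx_n\|^2 - 2\eta\Re\<Ax_n,Px_n\>\,\le\,\|Cx_n\|^2 + 2\eta\|P\|\,\|Ax_n\|\ \longrightarrow\ 0,
$$
so $x_n\to 0$, contradicting $\|x_n\|^2+\|u_n\|^2 = 1$. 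What makes this painless is that on $M$ one has $\|Ax_n\| = \|Bu_n\|\to 0$, which kills the indefinite cross term $\Re\<Ax_n,Px_n\>$ outright, so no balancing of the constants $\eta$, $\|P\|$, $m$ and $c_K$ against each other is needed. I expect this coercivity to be the one real obstacle in the argument.

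Finally, since $Q_M\in L(M)$ is self-adjoint and strictly positive it is boundedly invertible; in particular $\ker Q_M = \{0\}$ and $r_M\in M = \ran Q_M$, so Lemma \ref{l:sf_charac} (applied in $M$) yields that $V_{Q_M,r_M}$ has the unique minimizer $y^* := -Q_M^{-1}r_M =: (x_e,u_e)$ — equivalently one completes the square, $V_{Q_M,r_M}(y) = \<Q_M(y-y^*),y-y^*\> - \<Q_M y^*,y^*\>$. By the equivalence of \eqref{e:ssp} and \eqref{e:ssp2}, $y^*\in M\subseteq\dom A\times\calU$ is the unique solution of \eqref{e:ssp}. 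It remains to observe that \eqref{e:keinesonne} is just the optimality relation $Q_M y^* + r_M = 0$ rewritten: since $Q_M y^* = \PP_M Q y^*$ and $r_M = \PP_M r$, this says $\PP_M(Qy^* + r) = 0$, i.e.\ $\PP_{\ker[A\;B]}\sbvec{z+C^*Cx_e}{v+K^*Ku_e} = 0$.
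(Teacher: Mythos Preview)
Your proof is correct and follows essentially the same route as the paper: you compress the quadratic form to the closed subspace $\ker[A\;B]$, prove strict positivity of the compressed operator by the same contradiction argument using coercivity of $K$ and \eqref{e:ext_lyapunov}, and then invoke Lemma~\ref{l:sf_charac} to obtain the unique minimizer and the first-order condition \eqref{e:keinesonne}. Your write-up is in fact slightly more explicit than the paper's in bounding the cross term $\Re\<Ax_n,Px_n\>$ and in identifying \eqref{e:keinesonne} with $\PP_M(Qy^*+r)=0$.
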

\begin{proof}
As $A$ is a closed operator, so is $[A\;B]$, which implies that $\calX := \ker[A\;B]$ is closed in $\calH\times\calU$. Also, for $y\in\calX$ we have $\ell(y) = \<Ty,y\> + 2\Re\<q,y\>$, where
$$
T = \mathbb P_\calX\bmat{C^*C}00{K^*K}\bigg|_{\calX}
\qquad\text{and}\qquad
q = \mathbb P_\calX\bvec zv.
$$
We shall now show that the non-negative operator $T\in L(\calX)$ is in fact strictly positive and thus boundedly invertible, so that the claim follows from Lemma \ref{l:sf_charac}. Indeed, \eqref{e:keinesonne} is equivalent to $y^* = -T^{-1}q$. If $T$ was not strictly positive, there would exist a sequence $(y_n)\subset\calX$ with $\|y_n\|=1$ for all $n\in\N$ and $\<Ty_n,y_n\>\to 0$ as $n\to\infty$. If we write $y_n = \sbvec{x_n}{u_n}$, this means that $\|x_n\|^2+\|u_n\|^2=1$, $Cx_n\to 0$, and $Ku_n\to 0$. Since $K$ is coercive, it follows that $u_n\to 0$. Further, $Ax_n + Bu_n = 0$ yields $Ax_n\to 0$, hence with \eqref{e:ext_lyapunov} we conclude $x_n\to 0$ as $n\to\infty$. But this contradicts $\|x_n\|^2 + \|u_n\|^2 = 1$.
\end{proof}

\begin{thm}\label{t:some}
Let $P = P^*\in L(\calH)$, assume that $K$ is coercive and that $\ran[A\;B]$ is closed. Then there exist $\gamma>0$ and $w\in\calH$ such that \eqref{e:ocp} is strictly pre-dissipative at some controlled equilibrium with storage function $V_{\gamma P,w}$ if and only if \eqref{e:ext_lyapunov} holds. In this case, the controlled equilibrium $(x_e,u_e)$ can be chosen as the minimizer of problem \eqref{e:ssp}.
\end{thm}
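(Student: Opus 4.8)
The plan is to derive both implications from Theorem~\ref{t:char}, invoked at a suitably chosen controlled equilibrium, together with Lemma~\ref{l:ssp} and a closed-range argument for the operator $[A\;B]$.

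For the implication ``strict pre-dissipativity at some equilibrium $\Rightarrow$ \eqref{e:ext_lyapunov}'', suppose \eqref{e:ocp} is strictly pre-dissipative at a controlled equilibrium $(x_e,u_e)$ with storage function $V_{\gamma P,w}$ for some $\gamma>0$ and $w\in\calH$. Then statement~(i) of Theorem~\ref{t:char} holds at this particular $(x_e,u_e)$, hence statement~(ii) does as well, and in particular \eqref{e:ext_lyapunov} holds. Since \eqref{e:ext_lyapunov} does not refer to the equilibrium, this finishes the direction; observe that it uses neither coercivity of $K$ nor closedness of $\ran[A\;B]$.

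For the converse, assume \eqref{e:ext_lyapunov}. Coercivity of $K$ allows us to apply Lemma~\ref{l:ssp}, which produces the unique minimizer $(x_e,u_e)$ of the steady-state problem \eqref{e:ssp} and the orthogonality relation \eqref{e:keinesonne}, i.e.
\[
\bvec{z+C^*Cx_e}{v+K^*Ku_e}\ \perp\ \ker[A\;B].
\]
To conclude via the implication (ii)$\Rightarrow$(i) of Theorem~\ref{t:char}, it remains to promote this orthogonality to the range membership \eqref{e:alg2}, i.e.\ to identify $\big(\ker[A\;B]\big)^\perp$ with $\ran\sbvec{A^*}{B^*}$. Here I would use that $[A\;B]$, regarded as an operator from $\dom A\times\calU\subset\calH\times\calU$ into $\calH$, is densely defined (as $\dom A$ is dense in $\calH$) and closed (as $A$ is closed and $B$ bounded), and that a routine computation, based on $\<Ax+Bu,\xi\> = \<x,A^*\xi\>+\<u,B^*\xi\>$, identifies its adjoint with $\sbvec{A^*}{B^*}$ on $\dom A^*$. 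Since $\ran[A\;B]$ is closed by hypothesis, the closed-range theorem gives that $\ran[A\;B]^* = \ran\sbvec{A^*}{B^*}$ is closed as well, hence equals $\overline{\ran[A\;B]^*} = \big(\ker[A\;B]\big)^\perp$. Combining this with \eqref{e:keinesonne} yields \eqref{e:alg2}. Finally, Theorem~\ref{t:char}, (ii)$\Rightarrow$(i), applied at $(x_e,u_e)$, produces $\gamma>0$ and $w\in\calH$ such that \eqref{e:ocp} is strictly pre-dissipative at $(x_e,u_e)$ with storage function $V_{\gamma P,w}$; since $(x_e,u_e)$ is the minimizer of \eqref{e:ssp}, the last assertion of the theorem follows too.

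The only genuinely technical step is the identification $\big(\ker[A\;B]\big)^\perp = \ran\sbvec{A^*}{B^*}$: one must verify that $[A\;B]$ is closed and densely defined so that the unbounded-operator form of the closed-range theorem is applicable, and recognize that closedness of $\ran[A\;B]$ is exactly what turns the a priori merely dense space $\overline{\ran[A\;B]^*}$ into the actual range $\ran\sbvec{A^*}{B^*}$ occurring in \eqref{e:alg2}. Everything else reduces to direct applications of Theorem~\ref{t:char} and Lemma~\ref{l:ssp}.
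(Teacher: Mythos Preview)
Your argument is correct and follows exactly the paper's route: both directions are reduced to Theorem~\ref{t:char}, with Lemma~\ref{l:ssp} supplying the optimal steady state and the closed-range identity $(\ker[A\;B])^\perp = \ran\sbvec{A^*}{B^*}$ furnishing \eqref{e:alg2}. You simply spell out the closed-range step (closedness and dense definition of $[A\;B]$, identification of its adjoint) that the paper takes for granted.
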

\begin{proof}
If \eqref{e:ocp} is strictly pre-dissipative at some controlled equilibrium with storage function $V_{\gamma P,w}$, then \eqref{e:ext_lyapunov} holds by Theorem \ref{t:char}. 
Conversely, assume that \eqref{e:ext_lyapunov} is satisfied. By Lemma \ref{l:ssp}, the unique solution $y^* = (x_e,u_e)\in\ker[A\;B]$ of the optimal steady state problem \eqref{e:ssp} satisfies \eqref{e:keinesonne} and hence
$$
\bvec{z+C^*Cx_e}{v+K^*Ku_e}\,\in\,\big(\!\ker[A\;B]\big)^\perp = \ran\bvec{A^*}{B^*}.
$$
Thus, \eqref{e:alg2} holds and Theorem \ref{t:char} yields the claim. This also shows that $(x_e,u_e)$ can always be chosen as the minimizer of problem \eqref{e:ssp}.
\end{proof}

\begin{rem}
If $A$ is bounded, the condition \eqref{e:ext_lyapunov} can be written as an operator inequality of Lyapunov type:
$$
PA + A^*P\,\le\,\eta^{-1}(C^*C - mI).
$$
However, if $A$ is unbounded, the operator $T = PA + A^*P$ is in general not self-adjoint. It is symmetric (i.e., $T\subset T^*$), but in general it is not densely defined. Its domain might even be trivial, see \cite{arlinskiitretter20}. 
\end{rem}

We close this section with some corollaries.

\begin{cor}\label{c:simplify}
The OCP \eqref{e:ocp} is strictly pre-dissipative at $(x_e,u_e)$ with a quadratic storage function if and only if \eqref{e:alg2} holds and there exist a self-adjoint operator $P\in L(\calH)$ and $m>0$ such that
$$
\|Cx\|^2 - 2\Re\<Ax,Px\>\ge m\|x\|^2
$$
holds for all $x\in\dom A$.
\end{cor}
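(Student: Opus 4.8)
The plan is to deduce this directly from Theorem~\ref{t:char}, the only point being that the positive scalars $\gamma$ and $\eta$ occurring there can be absorbed into the self-adjoint operator $P$, since a positive multiple of a bounded self-adjoint operator is again bounded and self-adjoint.

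For the ``only if'' direction I would argue as follows. Suppose the OCP \eqref{e:ocp} is strictly pre-dissipative at $(x_e,u_e)$ with a quadratic storage function, say $V_{P_0,w}$ with $P_0=P_0^*\in L(\calH)$ and $w\in\calH$. Then statement~(i) of Theorem~\ref{t:char}, applied to the operator $P_0$, holds with $\gamma=1$; hence statement~(ii) yields \eqref{e:alg2} together with constants $\eta,m>0$ such that $\|Cx\|^2-2\eta\Re\<Ax,P_0x\>\ge m\|x\|^2$ for all $x\in\dom A$. Putting $P:=\eta P_0$, which is again a bounded self-adjoint operator, gives exactly the asserted inequality.

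For the ``if'' direction, assume \eqref{e:alg2} holds and that there are a self-adjoint $P\in L(\calH)$ and $m>0$ with $\|Cx\|^2-2\Re\<Ax,Px\>\ge m\|x\|^2$ for all $x\in\dom A$. This is precisely \eqref{e:ext_lyapunov} with $\eta=1$, so statement~(ii) of Theorem~\ref{t:char} is satisfied for this $P$. Consequently statement~(i) provides $\gamma>0$ and $w\in\calH$ such that \eqref{e:ocp} is strictly pre-dissipative at $(x_e,u_e)$ with storage function $V_{\gamma P,w}$; as $\gamma P$ is bounded and self-adjoint, this is a quadratic storage function in the sense of the definition, which is what we had to show.

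I do not anticipate any real obstacle here: the statement is essentially a reformulation of Theorem~\ref{t:char} with the scalar factors absorbed, and the only thing requiring (mild) care is the bookkeeping—checking that ``strict pre-dissipativity with \emph{some} quadratic storage function'' matches the scaled form $V_{\gamma P,w}$ appearing in Theorem~\ref{t:char}, which it does precisely because $\gamma P$ and $\eta P$ remain bounded self-adjoint operators.
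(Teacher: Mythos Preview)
Your argument is correct and matches the paper's intent: the corollary is stated without proof in the paper, precisely because it is an immediate reformulation of Theorem~\ref{t:char} obtained by absorbing the scalars $\gamma$ and $\eta$ into $P$. Your bookkeeping in both directions is accurate.
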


\begin{cor}\label{c:simplify2}
The OCP \eqref{e:ocp} is strictly dissipative at $(x_e,u_e)$ with a quadratic storage function if and only if there exist a non-negative operator $P\in L(\calH)$ and $m>0$ such that
$$
[A\;\;B]^{-*}\bvec{z+C^*Cx_e}{v+K^*Ku_e}\,\cap\,\ran P^{1/2}\neq\emptyset
$$
and
$$
\|Cx\|^2 - 2\Re\<Ax,Px\>\ge m\|x\|^2,\qquad x\in\dom A.
$$
\end{cor}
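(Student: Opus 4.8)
The plan is to treat strict dissipativity as strict pre-dissipativity \emph{plus} boundedness from below of the storage function, and to combine the two tools already available: Theorem~\ref{t:char} (equivalently Corollary~\ref{c:simplify}) describes strict pre-dissipativity with a quadratic storage function $V_{P,w}$, while Lemma~\ref{l:sf_charac} says that $V_{P,w}$ is bounded from below exactly when $P\ge 0$ and $w\in\ran P^{1/2}$. The elementary glue is that for $P\ge 0$ one has $Px_e=P^{1/2}(P^{1/2}x_e)\in\ran P^{1/2}$, that $\ran P^{1/2}$ is a linear subspace, and that $\ran(\gamma P)^{1/2}=\ran P^{1/2}$ for every $\gamma>0$; consequently, for a vector of the form $\xi=w+\gamma Px_e$ one has $w\in\ran(\gamma P)^{1/2}$ if and only if $\xi\in\ran P^{1/2}$.

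For the ``only if'' part I would argue as follows. If \eqref{e:ocp} is strictly dissipative at $(x_e,u_e)$ with a quadratic storage function, then it is so with some $V_{P,w}$, $P=P^*\in L(\calH)$, that is bounded from below and with some $\alpha\in\calK$. Lemma~\ref{l:sf_charac} gives $P\ge 0$ and $w\in\ran P^{1/2}$, and Proposition~\ref{p:vw} gives \eqref{e:alg} and \eqref{e:reduced}. I would then read \eqref{e:alg} as the statement $w+Px_e\in[A\;B]^{-*}\bvec{z+C^*Cx_e}{v+K^*Ku_e}$; since also $Px_e\in\ran P^{1/2}$, the vector $w+Px_e$ lies in $\ran P^{1/2}$, so the intersection in the assertion is nonempty. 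Setting $u=0$ in \eqref{e:reduced} and replacing $x$ by $x/\|x\|$ for $x\neq 0$ yields $\|Cx\|^2-2\Re\<Ax,Px\>\ge\alpha(1)\|x\|^2$ on $\dom A$, so one can take $m:=\alpha(1)>0$.

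For the converse I would start from $P\ge 0$, $m>0$ and a vector $\xi\in[A\;B]^{-*}\bvec{z+C^*Cx_e}{v+K^*Ku_e}\cap\ran P^{1/2}$ with $\|Cx\|^2-2\Re\<Ax,Px\>\ge m\|x\|^2$ on $\dom A$. The latter is \eqref{e:ext_lyapunov} with $\eta=1$, and the existence of $\xi$ forces \eqref{e:alg2}. I would then rerun the construction in the proof of Theorem~\ref{t:char}(ii)$\Rightarrow$(i): choose $\gamma>0$ small enough that \eqref{e:reduced} holds with $P$ replaced by $\gamma P$ (with $\alpha(t)=ct^2$), but this time take the preimage appearing there to be $\xi$ itself, i.e.\ set $w:=\xi-\gamma Px_e$. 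Then \eqref{e:alg} holds with $\gamma P$ in place of $P$, and Proposition~\ref{p:vw} gives strict pre-dissipativity of \eqref{e:ocp} at $(x_e,u_e)$ with storage function $V_{\gamma P,w}$; finally $\gamma P\ge 0$ and $w=\xi-\gamma Px_e\in\ran P^{1/2}=\ran(\gamma P)^{1/2}$, so $V_{\gamma P,w}$ is bounded from below by Lemma~\ref{l:sf_charac}, which is strict dissipativity.

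I do not expect a genuine obstacle: the substantive work — the completion-of-squares/coercivity argument passing from \eqref{e:ext_lyapunov} to \eqref{e:reduced} via \eqref{e:KB}, and the weak-compactness argument behind the $\ran P^{1/2}$ criterion — is already carried out in Theorem~\ref{t:char} and Lemma~\ref{l:sf_charac}. What needs care is the bookkeeping around $w$: one must notice that $w\in\ran P^{1/2}$ is equivalent to membership of the ``symmetrized'' vector $w+\gamma Px_e$, that this range does not change under $P\mapsto\gamma P$, and, in the converse direction, that the freedom in choosing the preimage $\xi$ in Theorem~\ref{t:char} is precisely what makes the resulting $w$ land in $\ran P^{1/2}$. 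If one prefers not to reopen that proof, the same $w$ can instead be produced directly from Proposition~\ref{p:vw} after $\gamma$ has been fixed.
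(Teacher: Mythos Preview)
Your proposal is correct and is precisely the argument the paper has in mind: the corollary is stated without proof because it is obtained by combining Proposition~\ref{p:vw}/Theorem~\ref{t:char} with Lemma~\ref{l:sf_charac}, using the observation that for $P\ge 0$ one has $w\in\ran P^{1/2}$ iff $w+\gamma Px_e\in\ran P^{1/2}=\ran(\gamma P)^{1/2}$. Your handling of the converse direction---reopening the proof of Theorem~\ref{t:char} to choose the preimage $\xi$ inside $\ran P^{1/2}$ so that the resulting $w=\xi-\gamma Px_e$ lands in $\ran(\gamma P)^{1/2}$---is exactly the right bookkeeping.
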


%

\begin{cor}\label{c:simplify4}
If $\ran[A\;B]$ is closed, then the OCP \eqref{e:ocp} is strictly pre-dissipative at some controlled equilibrium with a quadratic storage function if and only if there exist $P=P^*\in L(\calH)$ and $m>0$ such that
$$
\|Cx\|^2 - 2\Re\<Ax,Px\>\ge m\|x\|^2,\qquad x\in\dom A.
$$
\end{cor}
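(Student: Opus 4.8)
The plan is to read the statement off Theorem~\ref{t:some}: essentially nothing new is needed, only the observations that the auxiliary constant $\eta$ in the Lyapunov-like inequality \eqref{e:ext_lyapunov} can be absorbed into $P$, and that any quadratic storage function certifying strict pre-dissipativity already carries a $P$ for which the displayed inequality holds.

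For ``$\Rightarrow$'', suppose the OCP is strictly pre-dissipative at some controlled equilibrium $(x_e,u_e)$ with a quadratic storage function $V_{P,w}$ and dissipation rate $\alpha\in\calK$. I would apply Proposition~\ref{p:vw} to the pair $(P,w)$: among its conclusions is the reduced inequality \eqref{e:reduced}, i.e.\ $2\Re\<Ax+Bu,Px\>\le\|Cx\|^2+\|Ku\|^2-\alpha(\|x\|)$ for all $x\in\dom A$ and $u\in\calU$. Taking $u=0$ gives $\|Cx\|^2-2\Re\<Ax,Px\>\ge\alpha(\|x\|)$, and since the left-hand side scales by the factor $\|x\|^{-2}$ under $x\mapsto x/\|x\|$, substituting $x/\|x\|$ for $x\neq0$ turns this into $\|Cx\|^2-2\Re\<Ax,Px\>\ge\alpha(1)\|x\|^2$, which also holds at $x=0$. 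Since $\alpha\in\calK$ is strictly increasing with $\alpha(0)=0$, the constant $m:=\alpha(1)$ is positive, and this is the asserted inequality.

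For ``$\Leftarrow$'', suppose $P=P^*\in L(\calH)$ and $m>0$ satisfy $\|Cx\|^2-2\Re\<Ax,Px\>\ge m\|x\|^2$ on $\dom A$. Choosing $\eta=1$, this is exactly condition \eqref{e:ext_lyapunov}. Since $\ran[A\;B]$ is closed, Theorem~\ref{t:some} then produces $\gamma>0$, a vector $w\in\calH$, and a controlled equilibrium --- namely the minimizer of the steady-state problem \eqref{e:ssp} --- at which the OCP is strictly pre-dissipative with storage function $V_{\gamma P,w}$, which is a quadratic storage function; this gives the claim.

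I do not expect a genuine obstacle, since all the work sits in Theorem~\ref{t:some}. The two points to watch are the bookkeeping of the scaling constant (between the $\eta$ in \eqref{e:ext_lyapunov} and the $\gamma$ entering the storage function), and that Proposition~\ref{p:vw} must be invoked with the operator $P$ furnished by the given storage function rather than with a rescaled copy of it. The coercivity of $K$ that underlies Theorem~\ref{t:some} is what makes the ``$\Leftarrow$'' direction work, via Lemma~\ref{l:ssp}, by guaranteeing a steady state that satisfies the algebraic condition \eqref{e:alg2}.
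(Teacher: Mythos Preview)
Your argument is correct and is exactly the route the paper intends: the forward direction is the $u=0$ case of Proposition~\ref{p:vw} followed by the normalization trick from the proof of Theorem~\ref{t:char}, and the backward direction is Theorem~\ref{t:some} with $\eta=1$. One small caveat worth making explicit: the corollary, as stated, omits the coercivity hypothesis on $K$ that Theorem~\ref{t:some} carries; your closing remark shows you are aware that the $\Leftarrow$ direction, via Lemma~\ref{l:ssp}, needs it, so you should regard it as an implicit standing assumption inherited from Theorem~\ref{t:some} rather than something that can be dropped.
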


\section{Strict dissipativity and exponential detectability}
\label{sec:detec}
In what follows, we shall call an operator $A$ {\em exponentially stable} if it generates an exponentially stable $C_0$-semigroup $T(\cdot)$, i.e., $T(\cdot)x\in L^2([0,\infty),\calH)$ for each $x\in\calH$. The following theorem contains a weaker characterization for exponential stability as in the standard literature. However, the proof remains essentially the same.

\begin{thm}\label{t:char_stab}
Let $A$ be a generator of a $C_0$-semigroup in $\calH$. Then the following statements are equivalent:
\begin{enumerate}
\item[{\rm (i)}]   $A$ is exponentially stable.
\item[{\rm (ii)}]  There exists a positive operator $P\in L(\calH)$ such that
$$
2\Re\<Ax,Px\> = -\|x\|^2,\qquad x\in\dom A.
$$
\item[{\rm (iii)}] There exist a non-negative operator $P\in L(\calH)$ and $c>0$ such that
$$
2\Re\<Ax,Px\>\,\le\,-c\|x\|^2,\qquad x\in\dom A.
$$
\end{enumerate}
\end{thm}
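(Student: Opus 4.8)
The plan is to prove the cycle (i)$\Rightarrow$(ii)$\Rightarrow$(iii)$\Rightarrow$(i). The implication (ii)$\Rightarrow$(iii) is immediate with $c=1$, once one notes that a positive bounded operator is in particular non-negative. So the real work is in (i)$\Rightarrow$(ii) and (iii)$\Rightarrow$(i).

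For (i)$\Rightarrow$(ii), I would take the classical Lyapunov operator for exponentially stable semigroups: define
$$
\<Px,y\> := \int_0^\infty \<T(t)x,T(t)y\>\,dt,\qquad x,y\in\calH.
$$
Exponential stability guarantees $T(\cdot)x\in L^2([0,\infty),\calH)$ for every $x$, and by the uniform boundedness principle (applied to the family of maps $x\mapsto T(\cdot)x$ into $L^2$) this integral converges and defines a bounded, self-adjoint, non-negative operator $P$; positivity follows since $\<Px,x\>=\int_0^\infty\|T(t)x\|^2\,dt>0$ for $x\neq0$ by strong continuity. The identity $2\Re\<Ax,Px\>=-\|x\|^2$ for $x\in\dom A$ is then obtained by the usual computation: for $x\in\dom A$ the curve $t\mapsto T(t)x$ is $C^1$, so $\tfrac{d}{dt}\|T(t)x\|^2 = 2\Re\<T(t)Ax,T(t)x\> = 2\Re\<AT(t)x,T(t)x\>$ (using that $T(t)$ commutes with $A$ on $\dom A$ and is self-adjointly paired), and integrating from $0$ to $\infty$, using $\|T(t)x\|\to 0$ in an $L^2$-averaged sense and that $t\mapsto 2\Re\<AT(t)x,T(t)x\>$ is integrable, gives $-\|x\|^2 = \int_0^\infty 2\Re\<AT(t)x,T(t)x\>\,dt = 2\Re\<Ax,Px\>$; the last equality needs $\int_0^\infty T(t)^*AT(t)x\,dt$ to be interchangeable with the pairing, which one justifies by approximating $A$ on $\dom A$ or by first establishing $2\Re\<Ax,Px\> = \lim_{\tau\to\infty}(\|T(\tau)x\|^2 - \|x\|^2)$ directly from the definition of $P$ and the fundamental theorem of calculus applied to $t\mapsto\<PT(t)x,T(t)x\>$.

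For (iii)$\Rightarrow$(i), given a non-negative $P\in L(\calH)$ and $c>0$ with $2\Re\<Ax,Px\>\le -c\|x\|^2$ on $\dom A$, I would show $T(\cdot)x\in L^2$ for each $x$ by a Lyapunov/Datko-type argument. For $x\in\dom A$ the function $\phi(t):=\<PT(t)x,T(t)x\>$ is $C^1$ with $\phi'(t) = 2\Re\<AT(t)x,PT(t)x\>\le -c\|T(t)x\|^2$ (applying the hypothesis at $T(t)x\in\dom A$), hence for any $\tau>0$,
$$
c\int_0^\tau\|T(t)x\|^2\,dt \le \phi(0)-\phi(\tau)\le \<Px,x\>\le\|P\|\,\|x\|^2,
$$
using $\phi(\tau)\ge 0$. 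Letting $\tau\to\infty$ gives $\int_0^\infty\|T(t)x\|^2\,dt\le c^{-1}\|P\|\,\|x\|^2$ for all $x\in\dom A$, and since $\dom A$ is dense and $x\mapsto\|T(\cdot)x\|_{L^2}$ is lower semicontinuous (or by a standard extension using $\sup_{t\le R}\|T(t)\|<\infty$ on compacta), the bound extends to all $x\in\calH$; thus $T(\cdot)x\in L^2([0,\infty),\calH)$ for each $x\in\calH$, which is exactly exponential stability in the sense defined here.

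The main obstacle I expect is the rigorous justification in (i)$\Rightarrow$(ii) that the weak integral defining $P$ genuinely lands in $\dom$-compatible computations — i.e., interchanging the integral with the unbounded operator $A$ and the scalar product — since $A$ is only densely defined. The clean way around this is to avoid differentiating inside the integral: work instead with the identity $\<PT(s)x,T(s)x\> - \<PT(\tau)x,T(\tau)x\> = \int_s^\tau\|T(t)x\|^2\,dt$ (a direct consequence of the definition of $P$ and a change of variables), differentiate this in $s$ at $s=0$ for $x\in\dom A$ (the right-hand side is differentiable there with derivative $-\|x\|^2$, and the left-hand side is differentiable with derivative $2\Re\<PAx,x\> = 2\Re\<Ax,Px\>$ since $s\mapsto T(s)x$ is $C^1$ into $\calH$ and $P$ is bounded), and conclude. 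This sidesteps all domain issues with $A^*$ and the operator $PA+A^*P$ flagged in the Remark.
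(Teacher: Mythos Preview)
Your proposal is correct and follows essentially the same route as the paper: (ii)$\Rightarrow$(iii) is immediate, (iii)$\Rightarrow$(i) is exactly the Lyapunov/Datko argument via $\phi(t)=\<PT(t)x,T(t)x\>$ that the paper carries out, and for (i)$\Rightarrow$(ii) the paper simply cites \cite[Theorem~5.1.3]{CurtainZwart}, whose content is precisely the integral Lyapunov operator $\<Px,y\>=\int_0^\infty\<T(t)x,T(t)y\>\,dt$ you construct. Your final paragraph's ``differentiate $s\mapsto\<PT(s)x,T(s)x\>$'' trick is in fact the standard way to obtain the Lyapunov identity and neatly avoids the domain issues you flagged.
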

\begin{proof}
(i)$\Sra$(ii) follows from Theorem 5.1.3 in \cite{CurtainZwart} and (ii)$\Sra$(iii) is trivial. Assume that (iii) holds, let $x\in\dom A$ and define $w(t) := \<PT(t)x,T(t)x\>$, $t\ge 0$. Differentiating $w$ with respect to time gives $\dot w(t) = 2\Re\<PAT(t)x,T(t)x\>\le -c\|T(t)x\|^2$. Integrating this over $[0,t]$, we obtain $0\le w(t)\le w(0) - c\int_0^t\|T(s)x\|^2\,ds$ and hence $\int_0^t\|T(s)x\|^2\,ds\le \frac 1cw(0) = \frac 1c\<Px,x\>$ for $x\in\dom A$ and $t\ge 0$. As $\dom A$ is dense in $\calH$, this inequality can be extended to all $x\in\calH$, which shows that $A$ is exponentially stable.
\end{proof}

The pair $(A,C)$ is called {\em exponentially detectable} (cf.\ \cite[Definition 5.2.1]{CurtainZwart}), if there exists an operator $F\in L(\calY,\calH)$ such that $A + FC$ is exponentially stable.

In \cite{GrueneGuglielmi} it was proved that in finite dimensions strict pre-dissipativity (at some steady state) with a quadratic storage function is equivalent to detectability of the pair $(A,C)$. The next theorem partially extends this statement to the infinite-dimensional case.

\begin{thm}\label{t:det}
If $(A,C)$ is exponentially detectable, then there exists a non-negative operator $P\in L(\calH)$ such that \eqref{e:ext_lyapunov} holds. Conversely, if \eqref{e:ext_lyapunov} is satisfied with some $0\le P\in L(\calH)$ and $\ran C^*\subset\ran P$, then $(A,C)$ is exponentially detectable.
\end{thm}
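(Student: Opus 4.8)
The plan is to prove the two implications by translating between the Lyapunov-type form inequality \eqref{e:ext_lyapunov} and exponential stability of a modified generator, using Theorem \ref{t:char_stab} as the bridge. For the forward direction, suppose $(A,C)$ is exponentially detectable, so there is $F \in L(\calY,\calH)$ with $A_F := A + FC$ exponentially stable. By Theorem \ref{t:char_stab}(ii), pick a positive $P \in L(\calH)$ with $2\Re\<A_F x, Px\> = -\|x\|^2$ for all $x \in \dom A_F = \dom A$ (note $FC$ is bounded, so the domains coincide). Expanding, $2\Re\<Ax,Px\> = -\|x\|^2 - 2\Re\<FCx,Px\>$, hence
$$
\|Cx\|^2 - 2\eta\Re\<Ax,Px\> = \|Cx\|^2 + \eta\|x\|^2 + 2\eta\Re\<FCx,Px\>.
$$
The cross term is bounded below by $-2\eta\|F\|\,\|P\|\,\|Cx\|\,\|x\| \ge -\tfrac12\|Cx\|^2 - 2\eta^2\|F\|^2\|P\|^2\|x\|^2$ (Young's inequality), so the right-hand side is at least $\tfrac12\|Cx\|^2 + (\eta - 2\eta^2\|F\|^2\|P\|^2)\|x\|^2$, which is $\ge m\|x\|^2$ with $m := \eta/2 > 0$ once $\eta$ is chosen small enough that $2\eta\|F\|^2\|P\|^2 \le \tfrac12$. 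Thus \eqref{e:ext_lyapunov} holds with this $P \ge 0$ (indeed $P$ positive).

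For the converse, assume \eqref{e:ext_lyapunov} holds with $0 \le P \in L(\calH)$ and, additionally, $\ran C^* \subset \ran P$. The idea is to construct the output injection $F$ by a completion-of-the-square argument on the quantity $2\Re\<(A+FC)x,Px\>$ and then invoke Theorem \ref{t:char_stab}(iii). Write the inequality (with $\eta = 1$ WLOG) as $2\Re\<Ax,Px\> \le \|Cx\|^2 - m\|x\|^2$. We want $F$ with $2\Re\<(A+FC)x,Px\> \le -c\|x\|^2$, i.e.\ $2\Re\<FCx,Px\> \le -\|Cx\|^2 + (m-c)\|x\|^2$; it suffices to arrange $2\Re\<FCx,Px\> \le -\|Cx\|^2$, i.e.\ $2\Re\<Cx, F^*Px\>_{\calY} \le -\|Cx\|^2$. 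The natural choice is $PF = -\tfrac12 C^*$ on $\ran C$, i.e.\ to take $F := -\tfrac12 P^{-1}C^*$, which is well-defined as a bounded operator precisely because of the hypothesis $\ran C^* \subset \ran P$ — here one uses that $P^{-1}C^* : \calY \to \calH$ is closed (as $P$ is bounded self-adjoint and hence $P^{-1}$ is a closed operator) and everywhere defined, hence bounded by the closed graph theorem. With this $F$, $2\Re\<FCx,Px\> = -\Re\<C^*Cx, x\> = -\|Cx\|^2$, so $2\Re\<(A+FC)x,Px\> \le -m\|x\|^2$ for all $x \in \dom A = \dom(A+FC)$, and Theorem \ref{t:char_stab}(iii) gives that $A+FC$ is exponentially stable, i.e.\ $(A,C)$ is exponentially detectable.

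The main obstacle is the converse direction's well-definedness of $F = -\tfrac12 P^{-1}C^*$: one must verify carefully that $\ran C^* \subset \ran P$ together with boundedness of $P$ forces $P^{-1}C^*$ to extend to a bounded operator on all of $\calY$. The subtlety is that $P$ need not be injective, so $P^{-1}$ is genuinely a relation; the clean fix is to replace $P^{-1}$ by the pseudoinverse and note that $F^*$ only needs to satisfy $PF^* {}= {}$ wait — rather, one defines $F y := -\tfrac12 (P\restr_{\clran P})^{-1}\mathbb P_{\clran P}C^* y$ using that $\ran C^* \subset \ran P \subset \clran P$, checks via the closed graph theorem that this is bounded $\calY \to \clran P \subset \calH$, and verifies $PFy = -\tfrac12 C^* y$ for all $y$ (since $C^*y \in \ran P$ and $(P\restr_{\clran P})^{-1}$ inverts $P$ there). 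A secondary point worth a remark is that, unlike in finite dimensions, the converse requires the extra range condition $\ran C^* \subset \ran P$; without it the argument breaks, which is presumably why the theorem is stated only as a partial extension.
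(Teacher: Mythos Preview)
Your proof is correct. The converse direction is essentially identical to the paper's: both restrict $P$ to $\calH_0 := \clran P$ to get an injective $P_0$, set $F := -\tfrac12 P_0^{-1}C^*$, invoke the closed graph theorem for boundedness, and verify $2\Re\<(A+FC)x,Px\> = 2\Re\<Ax,Px\> - \|Cx\|^2 \le -m\|x\|^2$ so that Theorem~\ref{t:char_stab}(iii) applies. (Your final paragraph, despite the informal ``wait'', lands exactly on this construction.)

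For the forward direction, however, your argument is genuinely simpler than the paper's. The paper rewrites $\|Cx\|^2 - 2\eta\Re\<Ax,Px\>$ as $\<Tx,x\>$ with $T = C^*C + \eta(C^*K^* + KC + I)$ (where $K = PF$), then splits $\calH$ via the spectral projection of $C^*C$ at a threshold $\veps$ into a piece where $\|Cx\|$ is small and a piece where $C^*C \ge \veps$, and finally uses a $2\times 2$ Schur complement computation to show $T$ is strictly positive for small $\eta$. You bypass all of this with a single Cauchy--Schwarz/Young estimate on the cross term $2\eta\Re\<FCx,Px\>$, absorbing $\tfrac12\|Cx\|^2$ into the leading $\|Cx\|^2$ and the $O(\eta^2)$ remainder into $\eta\|x\|^2$. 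Your route is shorter and avoids spectral calculus entirely; the paper's route, while heavier, makes the structure of the operator $T$ explicit and would generalize more readily if one wanted sharper constants or a block-operator formulation.
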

\begin{proof}
Assume that $(A,C)$ is exponentially detectable. By definition, there exists $F\in L(\calY,\calH)$ such that $A + FC$ is exponentially stable. Hence, by Theorem \ref{t:char_stab} there exists a positive operator $P\in L(\calH)$ such that
$$
2\Re\<(A+FC)x,Px\> = -\|x\|^2,\qquad x\in\dom A.
$$
Let $\eta>0$. For $x\in\dom A$ we obtain
\begin{align*}
\|Cx\|^2 - 2\eta\Re\<Ax,Px\>
&= \|Cx\|^2 + \eta\big( 2\Re\<FCx,Px\> + \<x,x\> \big)\\
&= \left\< \big(C^*C + \eta C^*F^*P + \eta PFC + \eta I\big)x,x\right\>.
\end{align*}
Let $K := PF$. In the remainder of this proof it will be shown that the bounded self-adjoint operator
$$
T := C^*C + \eta C^*K^* + \eta KC + \eta I
$$
is strictly positive for a suitable $\eta>0$. Then \eqref{e:ext_lyapunov} follows. To this end, let $\veps\in(0,\frac 14\|K\|^{-2})$ and decompose the Hilbert space as $\calH = \calH_1\oplus\calH_2$ with $\calH_1 = \ran E([0,\veps])$ and $\calH_2 = \ran E((\veps,\infty))$, where $E$ denotes the spectral measure of the non-negative self-adjoint operator $C^*C$. Set $\PP_j := \PP_{\calH_j}$, $j=1,2$. With respect to this decomposition write
$$
T = \begin{bmatrix}T_{11} & T_{12}\\T_{21} & T_{22}\end{bmatrix}
$$
where $T_{ij} = \PP_i T|_{\calH_j}$, $i,j=1,2$. For $x\in\calH_1$ we have
\begin{align*}
\<T_{11}x,x\> = \|Cx\|^2 + \eta\<K^*x,Cx\> + \eta\<Cx,K^*x\> + \eta\|x\|^2\ge \eta\big(1-2\|K\|\sqrt{\veps}\big)\|x\|^2
\end{align*}
as $\|Cx\|\le\sqrt{\veps}\|x\|$ (see Lemma \ref{l:helpy}(b)). With our choice of $\veps$ we conclude that $\kappa := 1-2\|K\|\sqrt{\veps} > 0$ so that $T_{11}\ge \eta\kappa$.

Write $T = C^*C + \eta Z$ with $Z = C^*K^* + KC + I$. As both $\calH_1$ and $\calH_2$ are invariant under $C^*C$, it follows that
$$
T_{12} = \eta Z_{12},\quad T_{21} = \eta Z_{12}^*,\quad\text{and}\quad T_{22} = C^*C|_{\calH_2} + \eta Z_{22},
$$
where $Z_{ij} = \PP_iZ|_{\calH_j}$. The second Schur complement of the above decomposition of $T$ is the bounded operator in $\calH_2$ given by
$$
S = T_{22} - T_{21}T_{11}^{-1}T_{12} = C^*C|_{\calH_2} + \eta Z_{22} - \eta^2 Z_{12}^* T_{11}^{-1} Z_{12}.
$$
Now, $T_{11}\ge \eta\kappa$ implies $T_{11}^{-1}\le \frac 1{\eta\kappa}$ and hence $Z_{12}^* T_{11}^{-1} Z_{12}\le \frac{\|Z_{12}\|^2}{\eta\kappa}$. Therefore,
$$
S \ge C^*C|_{\calH_2} + \eta \left(Z_{22} - \tfrac{1}{\kappa}\|Z_{12}\|^2\right),
$$
which is strictly positive for $\eta>0$ sufficiently small as $C^*C|_{\calH_2}\ge\veps$. Therefore, the same holds for $T$ since
$$
\begin{bmatrix}T_{11} & T_{12}\\T_{21} & T_{22}\end{bmatrix} = 
\begin{bmatrix}I & 0\\T_{21}T_{11}^{-1} & I\end{bmatrix}
\begin{bmatrix}T_{11} & 0\\0 & S\end{bmatrix}
\begin{bmatrix}I & T_{11}^{-1}T_{12}\\0 & I\end{bmatrix}.
$$
This proves \eqref{e:ext_lyapunov}.

\smallskip
Assume now that \eqref{e:ext_lyapunov} holds with (WLOG) $\eta = 1$ and that $\ran C^*\subset\ran P$. Let $\calH_0 := \ol{\ran}P$ and $P_0 := P|_{\calH_0}\in L(\calH_0)$. Then $P_0$ is (possibly not boundedly) invertible such that $PP_0^{-1}y = y$ for all $y\in\ran P$. Next, the operator $F := -\frac 12 P_0^{-1}C^*$ is bounded by the closed graph theorem, i.e. $F\in L(\calY,\calH)$, and for $x\in\dom A$ we have
\begin{align*}
2\Re\<(A+FC)x,Px\> &= 2\Re\<Ax,Px\> - \<P_0^{-1}C^*Cx,Px\>\\
&= 2\Re\<Ax,Px\> - \|Cx\|^2\\
&\le\,-m\|x\|^2.
\end{align*}
Hence, $A+FC$ is exponentially stable by Theorem \ref{t:char_stab}.
\end{proof}

Combining Theorems \ref{t:some} and \ref{t:det} shows that exponential detectability implies strict pre-dissi\-pa\-ti\-vi\-ty. If $\ran C^*\subset\ran P$, then also the converse holds. We note that invertibility (or, equivalently, strict positivity) of $P$ is sufficient for this condition to hold.

\section{Conclusion}
We defined strict (pre-)dissipativity at controlled equilibria for linear-quadratic optimal control problems in infinite dimensions and characterized this notion in various ways. We investigated quadratic functions in terms of boundedness from below and the existence of minimizers and characterized strict pre-dissipativity with quadratic storage functions. It turned out that this property depends on an algebraic condition (see \eqref{e:alg2}) and an analytic one (see \eqref{e:ext_lyapunov}) which is a form version of a Lyapunov-type operator inequality. We also proved that this analytic condition is closely related to exponential detectability of the pair $(A,C)$. This connection will be the subject of future work. Moreover, we will investigate the differences between conditions for strict dissipativity and strict pre-dissipativity.

\bibliographystyle{abbrv}
\bibliography{references.bib}

\appendix
\section{Self-adjoint operators}\label{as:sa}
Let $A$ be a self-adjoint operator in a Hilbert space $\calH$. By the spectral theorem for self-adjoint operators in Hilbert spaces (see, e.g., \cite[Theorem VIII.6]{reedsimon1}), there exists an orthoprojection-valued measure $E$ on the Borel sigma algebra $\calB$ of $\R$ which has the following properties:
\begin{itemize}
\item $E(\R\backslash\sigma(A)) = 0$ and $E(\sigma(A)) = I$.
\item $E(\Delta_1\cap\Delta_2) = E(\Delta_1)E(\Delta_2) = E(\Delta_2)E(\Delta_1)$ for $\Delta_1,\Delta_2\in\calB$.
\item $E(\Delta)\calH$ is $A$-invariant for each $\Delta\in\calB$.
\item $\sigma(A|_{E(\Delta)\calH})\subset\sigma(A)\cap\Delta$ for closed $\Delta\in\calB$.
\item For each $x\in\dom A$,
$$
\<Ax,x\> = \int_\R t\,d\nu_x(t)\AND \|Ax\|^2 = \int_\R t^2\,d\nu_x(t),
$$
where $\nu_x$ is the measure $\nu_x(\Delta) := \|E(\Delta)x\|^2$, $\Delta\in\calB$.
\end{itemize}
In the finite-dimensional case (i.e., when $A\in\C^{n\times n}$ is a Hermitian matrix), the projection $E(\Delta)$ is the orthogonal projection onto the sum of eigenspaces corresponding to the eigenvalues of $A$ in $\Delta$. This is also the case if $A$ has only discrete spectrum in $\Delta$ (i.e., isolated eigenvalues of finite muliplicities). Here, we need the following simple implications from the above.

\begin{lem}\label{l:helpy}
The following statements hold:
\begin{enumerate}
\item[(a)] If $\Delta\in\calB$ is compact and $0\notin\Delta$, then $E(\Delta)\calH\subset\ran A$.
\item[(b)] For $x\in E([-r,r])\calH$, $r > 0$, we have $\<Ax,x\>\le r\|x\|^2$.
\end{enumerate}
\end{lem}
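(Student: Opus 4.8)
The plan is to read both statements directly off the spectral measure $E$ of $A$ and the scalar measures $\nu_x(\Delta) := \|E(\Delta)x\|^2$ recalled above. Part (b) will be immediate, and part (a) needs only a short remark about the restriction of $A$ to a spectral subspace.

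For (b), I would first observe that $x\in E([-r,r])\calH$ forces $\nu_x$ to be supported in $[-r,r]$: since $E([-r,r])x=x$, for every Borel set $\Delta$ we have $E(\Delta)x = E(\Delta)E([-r,r])x = E(\Delta\cap[-r,r])x$, hence $\nu_x\big(\R\setminus[-r,r]\big)=0$. In particular $\int_\R t^2\,d\nu_x\le r^2\|x\|^2<\infty$, so $x\in\dom A$ and the integral formula for $\<Ax,x\>$ applies. Then
$$
\<Ax,x\> = \int_\R t\,d\nu_x(t) = \int_{[-r,r]} t\,d\nu_x(t)\,\le\,r\,\nu_x([-r,r]) = r\,\|E([-r,r])x\|^2 = r\|x\|^2,
$$
which is the assertion.

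For (a), set $\calH_\Delta := E(\Delta)\calH$ and $M := \sup\{|t| : t\in\Delta\}$, which is finite since $\Delta$ is compact. As in (b), for $x\in\calH_\Delta$ the measure $\nu_x$ is supported in $\Delta$, so $\|Ax\|^2 = \int t^2\,d\nu_x\le M^2\|x\|^2$; thus $\calH_\Delta\subset\dom A$, and since $\calH_\Delta$ reduces $A$, the restriction $A_\Delta := A|_{\calH_\Delta}$ is a bounded self-adjoint operator on $\calH_\Delta$. Using the spectral-inclusion property for the closed set $\Delta$, namely $\sigma(A_\Delta)\subset\sigma(A)\cap\Delta$, together with the hypothesis $0\notin\Delta$, I conclude that $0$ does not lie in the spectrum of $A_\Delta$, so $A_\Delta$ is boundedly invertible on $\calH_\Delta$. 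Hence every $x\in\calH_\Delta$ can be written as $x = A\big(A_\Delta^{-1}x\big)$ with $A_\Delta^{-1}x\in\calH_\Delta\subset\dom A$, i.e.\ $x\in\ran A$, which proves $E(\Delta)\calH\subset\ran A$.

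The only point deserving care — and the closest thing to an obstacle — is the passage to the restricted operator in (a): one must check that $A|_{\calH_\Delta}$ genuinely is a bounded self-adjoint operator on the subspace and that its spectrum is contained in $\sigma(A)\cap\Delta$. Compactness of $\Delta$ enters here decisively, since it is precisely what makes the restriction bounded and therefore turns $0\notin\sigma(A_\Delta)$ into a \emph{bounded} inverse on $\calH_\Delta$; everything else is routine computation with the scalar spectral measures.
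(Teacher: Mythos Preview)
Your argument is correct and follows essentially the same route as the paper: for (a) you use that $A_\Delta = A|_{E(\Delta)\calH}$ is boundedly invertible on $E(\Delta)\calH$ because $0\notin\sigma(A_\Delta)\subset\sigma(A)\cap\Delta$, and for (b) you integrate $t$ against $\nu_x$ supported in $[-r,r]$. You simply supply more detail than the paper (e.g.\ verifying $\calH_\Delta\subset\dom A$ and $x\in\dom A$), but the ideas are identical.
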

\begin{proof}
The claim (a) follows from the fact that $A_\Delta := A|_{E(\Delta)\calH}$ is boundedly invertible in $E(\Delta)\calH$. Hence $E(\Delta)\calH = \ran A_\Delta\subset\ran A$. For the proof of (b) set $\Delta = [-r,r]$ and observe that
\begin{align*}
\<Ax,x\> &= \int_\R t\,d\nu_x(t) = \int_\Delta t\,d\nu_x(t) \\&\le r\nu_x(\Delta) = r\|E(\Delta)x\|^2 = r\|x\|^2
\end{align*}
for $x\in E(\Delta)\calH$.
\end{proof}
\end{document}